\newtheorem{theorem}{Theorem}[section]
\newtheorem{lemma}[theorem]{Lemma}
\newtheorem{proposition}[theorem]{Proposition}
\newtheorem{corollary}[theorem]{Corollary}
\theoremstyle{definition}
\newtheorem{definition}[theorem]{Definition}
\newtheorem{remark}[theorem]{Remark}
\newtheorem{example}[theorem]{Example}
\begin{document}

\title[ Notes on Cohomologies of Ternary Algebras of Associative Type]
{ Notes on Cohomologies of Ternary Algebras \\ of Associative Type}%
\author{H. Ataguema and  A. Makhlouf}%
\address{Universit\'{e} de Haute Alsace,  Laboratoire de Math\'{e}matiques, Informatique et Applications,
4, rue des Fr\`{e}res Lumi\`{e}re F-68093 Mulhouse, France}%
\email{Abdenacer.Makhlouf@uha.fr}
\email{Hammimi.Ataguema@uha.fr}

\maketitle

\begin{abstract}
The aim of this paper is to investigate the cohomologies for
 ternary algebras of associative type. We study in particular the cases of partially associative ternary algebras
 and  weak totally
associative ternary algebras. Also, we consider the Takhtajan's
construction, which was used to construct a cohomology of ternary Nambu-Lie
algebras using Chevalley-Eilenberg cohomology of Lie algebras, and  discuss it in
the case of ternary algebras of associative type. One of the main results of this paper states that a
deformation cohomology does not exist for partially associative
ternary algebras which implies that their operad is not a Koszul operad.

\par\smallskip
{\bf 2000 MSC:} 17A40, 55N99.
\end{abstract}

\section*{Introduction}
The  paper is dedicated to study cohomologies adapted to deformation
theory of  ternary algebraic structures appearing more or less
naturally in various domains of theoretical and mathematical physics
and data processing. Indeed, theoretical physics progress of quantum
mechanics and the discovery in 1973 of the Nambu mechanics (see \cite{Nambu}), as well
as a work of S. Okubo on Yang-Baxter equation (see \cite{Okubo}) gave  impulse to a
significant development on ternary algebras and more generally $n$-ary algebras.  The ternary
operations, in particular cubic matrices, were already introduced in
the nineteenth century by Cayley. The cubic matrices were considered
again and generalized by Kapranov, Gelfand and  Zelevinskii in 1994
(see \cite{KapranovGelfandZelinski}) and Sokolov in 1972 (see
\cite{Sokolov}).
 Another recent motivation to study ternary operation comes from string
theory and M-Branes where appeared naturally a so called
Bagger-Lambert algebra (see \cite{BL2007}). For other physical
applications (see \cite{Kerner, Kerner2, Kerner3, Kerner4, Kerner5, Kerner7}).

The concept of ternary algebras was introduced first by Jacobson
(see \cite{Jacobson}). In connection with problems from Jordan theory and
quantum mechanics, he defined the Lie triple systems. A Lie triple
system consists of a space of linear operators on vector space $V$
that is closed under the ternary bracket $[x,y,z]_T=[[x,y],z]$,
where $[x,y]=x y-y x$. Equivalently, a Lie triple system may be
viewed as a subspace of the Lie algebra closed relative to the
ternary product. A Lie triple system arose also in the study of
symmetric spaces (see \cite{loosSym}). We distinguish two
kinds of generalization of binary Lie algebras : ternary Lie
algebras (resp. $n$-ary Lie algebras) in which the Jacobi identity is generalized by considering
a cyclic summation over $\mathcal{S}_5$ (resp. $\mathcal{S}_{2n-1}$) instead of $\mathcal{S}_3$
(see \cite{Hanlon} \cite{Michor}), and ternary Nambu algebras (resp. $n$-ary Nambu algebras) in which
the fundamental identity generalizes the fact that the adjoint maps
are derivations. The fundamental identity appeared first in Nambu
mechanics (see \cite{Nambu}), see also  \cite{Takhtajan} for the algebraic
formulation of the Nambu mechanics.  The abstract definitions of ternary and more generally $n$-ary Nambu
algebras or $n$-ary Nambu-Lie algebras (when the bracket is skew-symmetric) were given by Fillipov in 1985 (see \cite{Filippov} in
Russian). While the  $n$-ary  Leibniz algebras
were introduced and studied in \cite{CassasLodayPirashvili}. For
deformation theory and cohomologies of ternary algebras of Lie
type, we refer to
\cite{Gautheron,Gautheron1,Takhtajan1,Kubo,Harris,Takhtajan1,Yamaguti}.

In another hand,  ternary algebras or more generally $n$-ary algebras of associative type were studied
by Carlsson, Lister, Loos (see \cite{Carlsson,Lister, Loos}). The typical and founding example of
 totally associative ternary algebra was introduced by Hestenes (see \cite{Hestenes})
  defined on the linear space of rectangular  matrices $A,B,C\in \mathcal{M}_{m,n}$ with complex entries by $AB^{\ast }C$ where $B^{\ast}$ is the conjugate transpose matrix of $B$. This operation is strictly speaking not a ternary algebra product on $\mathcal{M}_{m,n}$ as it is linear on the first and the third arguments but conjugate-linear on the second argument. The
ternary operation of associative type leads to two principal classes  : totally associative ternary algebras
 and partially associative ternary algebras.  Also they admit some variants. The
 totally associative ternary algebras are also called associative triple systems. The operads of $n$-ary algebras were studied by Gnedbaye (see \cite{Gnedbaye1,Gnedbaye2}), see also \cite{GozeRemm,Hoffbeck}.
  The
cohomology of totally associative ternary algebras was studied by
Carlsson through the embedding (see \cite{Carlsson1}). In \cite{AM2007}, we  extended to ternary algebras of associative
type, the 1-parameter formal deformations introduced by Gerstenhaber
\cite{Gerstenhaber1}, see \cite{Makhlouf} for a review. We built a 1-cohomology and 2-cohomology of
partially associative ternary algebras fitting with the deformation
theory.

The generalized Poisson structures and $n$-ary Poisson brackets were discussed in \cite{De Azcarraga,De Azcarraga2,Michor,Grabowski}. While the quantization problem was considered in \cite{Flato,Dito1}. Further generalizations and related works could be found in  \cite{Bazunova,Duplij,GozeRaush,Rausch1,Rausch2,Rausch3}.

In this paper we summarize in the first Section the definitions of
ternary algebras of associative type and Lie type with examples, and
recall  the basic settings of homological algebra. Section 2 is
devoted to study the cohomology of partially associative ternary
algebras with values in the algebra. We provide the first and the
second coboundary operators and show that their extension to a
3-coboundary does not exist. This shows that the operad of partially
ternary algebras is not a Koszul operad. In Section 3, we consider
weak totally associative ternary algebras for which we construct a
$p$-coboundary operator extending, to any $p$, the
2-coboundary operators already defined by Takhtajan
(see \cite{Takhtajan1}). In Section 4, we discuss Takhtajan's construction
for ternary algebras of associative type. The process was introduced
by Takhtajan to construct a cohomology for ternary algebras of Lie
type starting from a cohomology of binary algebras. It was used to
derive the cohomology of ternary Nambu-Lie algebras from the
Chevalley-Eilenberg cohomology of Lie algebras. We show that the
cohomology of ternary algebras of partially associative type cannot be
constructed from binary algebras of associative type. We also show in Section 5,
that the skew-associative binary algebras do not carry a cohomology
fitting with deformation theory and therefore their operad is not
Koszul as well.

\section{Generalities}

In this section we summarize the definitions of different
 ternary algebra structures of associative type and Lie type and
 provide some example, then give
general settings for cohomology theories.

\subsection{ Ternary Algebra Structures}

Let $\mathbb{K }$ be an algebraically closed field of characteristic
zero and  $V$ be a $\mathbb K$-vector space. A ternary operation on $V$ is
a linear map $ m:V\otimes V\otimes V\longrightarrow V$ or a
trilinear map $ m:V\times V\times V\longrightarrow V$. If $V$ is
$n$-dimensional vector space and $B=\{e_{1},...,e_{n}\}$ be a basis
of $V$, the
ternary operation $m$ is\ completely determined by its structure constants $%
\{C_{ijk}^{s}\}$ where $
m(e_{i}\otimes e_{j}\otimes e_{k}) =\overset{n}{\underset{s=1}{\sum }}%
C_{ijk}^{s}e_{s}. $ A ternary operation is said to be
\emph{symmetric} (resp. \emph{skew-symmetric}) if
\begin{equation}
m(x_{\sigma (1)}\otimes x_{\sigma (2)}\otimes x_{\sigma (3)})=
m(x_{1}\otimes x_{2}\otimes x_{3}), \quad\forall \sigma \in \mathcal{S}_{3}\text{ and }%
\forall x_{1}, x_{2}, x_{3}\in V
\end{equation}
resp.
\begin{equation}
m(x_{\sigma (1)}\otimes x_{\sigma (2)}\otimes x_{\sigma
(3)})=Sgn(\sigma
)m(x_{1}\otimes x_{2}\otimes x_{3}),\quad\forall \sigma \in \mathcal{S}_{3}\text{ and }%
\forall x_{1},x_{2},x_{3}\in V
\end{equation}
Where $Sgn(\sigma )$ denotes the signature of the permutation
$\sigma\in \mathcal{S}_{3}$.

We have the following type of "associative" ternary operations.

\begin{definition}
A totally associative ternary algebra is given by a $\mathbb{K}$-vector
space $V$\ and a ternary operation $m$ satisfying, for every $x_1,\cdots,x_5\in V$%
,
\begin{equation}
m(m(x_1\otimes x_2\otimes x_3)\otimes x_4\otimes x_5) =m(x_1\otimes m(x_2\otimes x_3\otimes x_4)\otimes x_5) =m(x_1\otimes x_2\otimes m(x_3\otimes x_4\otimes x_5))
\end{equation}
\end{definition}
\begin{example}
Let $\{e_1,e_2\}$ be a basis of a 2-dimensional space  $V=\mathbb{K
}^2$, the ternary  operation on $V$ given by
\begin{equation*}
\begin{array}{ll}
\begin{array}{lll}
m(e_{1}\otimes  e_{1}\otimes  e_{1}) & = & e_{1} \\
m(e_{1}\otimes  e_{1}\otimes  e_{2}) & = & e_{2} \\
m(e_{1}\otimes  e_{2}\otimes  e_{2}) & = & e_{1}+e_{2} \\
m(e_{2}\otimes  e_{1}\otimes  e_{1}) & = & e_{2}%
\end{array}
&
\begin{array}{lll}
m(e_{2}\otimes  e_{2}\otimes  e_{1}) & = & e_{1}+e_{2} \\
m(e_{2}\otimes  e_{2}\otimes  e_{2}) & = & e_{1}+2e_{2} \\
m(e_{1}\otimes  e_{2}\otimes  e_{1}) & = & e_{2} \\
m(e_{2}\otimes  e_{1}\otimes  e_{2}) & = & e_{1}+e_{2}%
\end{array}%
\end{array}
\end{equation*}

defines a totally associative ternary algebra.
\end{example}
\begin{definition}
A weak totally associative ternary algebra is given by a
$\mathbb{K}$-vector space $V$\ and a ternary operation $m$,
satisfying for every
$x_1,\cdots,x_5\in V$%
,
\begin{equation}
m(m(x_1\otimes x_2\otimes x_3)\otimes x_4\otimes x_5) =m(x_1\otimes x_2\otimes m(x_3\otimes x_4\otimes x_5))
\end{equation}
\end{definition}

Naturally, any totally associative ternary algebra is a weak totally
associative ternary algebra.

\begin{definition}
A partially associative ternary algebra is given by a $\mathbb{K}$-vector
space $V $ and a ternary operation $m$ satisfying, for every $x_1,\cdots,x_5\in V$,
\begin{equation}
m(m(x_1\otimes x_2\otimes x_3)\otimes x_4\otimes x_5) +m(x_1\otimes m(x_2\otimes x_3\otimes
x_4)\otimes x_5) +m(x_1\otimes x_2\otimes m(x_3\otimes x_4\otimes x_5))=0
\end{equation}
\end{definition}

\begin{example}
Let $\{e_1,e_2\}$ be a basis of a 2-dimensional space
  $V=\mathbb{K }^2$, the ternary  operation on $V$ given by
$
m(e_{1}\otimes  e_{1}\otimes  e_{1})=e_{2}
$
defines a partially  associative ternary algebra.
\end{example}

We introduce in the following some variants of partial total
associativity of ternary operations.
\begin{definition}
An alternate partially associative ternary algebra of first kind is given by a $\mathbb{K}$%
-vector space $V$ and a ternary operation $m$ satisfying, for every $x_1,\cdots,x_5\in V$,
\begin{equation}
m(m(x_1\otimes x_2\otimes x_3)\otimes x_4\otimes x_5) -m(x_1\otimes m(x_2\otimes x_3\otimes
x_4)\otimes x_5) +m(x_1\otimes x_2\otimes m(x_3\otimes x_4\otimes x_5))=0
\end{equation}
The alternate partially associative ternary algebra is of second
kind it satisfies :
\begin{equation}
m(m(x_1\otimes x_2\otimes x_3)\otimes x_4\otimes x_5) -m(x_1\otimes m(x_2\otimes x_3\otimes
x_4)\otimes x_5) -m(x_1\otimes x_2\otimes m(x_3\otimes x_4\otimes x_5))=0
\end{equation}
\end{definition}

\begin{remark}
Let $(V,\cdot)$ be a bilinear associative algebra. Then, the ternary
operation, defined by
$
m(x_1\otimes x_2\otimes x_3)=(x_1\cdot x_2\cdot x_3)
$
determines on the vector space $V$ a structure of totally associative
ternary algebra which is not partially associative.
\end{remark}

\begin{definition}
A ternary operation $m$ is said to be commutative if
\begin{equation}
\sum_{\sigma \in \mathit{S}_{3}}{\ Sgn(\sigma )
m(x_{\sigma(1)}\otimes x_{\sigma (2)}\otimes x_{\sigma (3)}) =0},\
\quad \forall x_{1},x_{2},x_{3}\in V
\end{equation}

\end{definition}

\begin{remark}
A symmetric ternary operation is commutative.
\end{remark}
In the following, we recall the definitions of ternary algebras of
Lie type.
\begin{definition}
A ternary Lie algebras is a skew-symmetric ternary operation $[~,~,~]$ on a $\mathbb K$-vector space $V$ satisfying $\forall x_{1},\cdots, x_{5}\in V$ the
following generalized Jacobi condition
\begin{align*}
\sum_{\sigma \in \mathit{S}_{3}}{\ Sgn(\sigma )[[x_{\sigma
(x_{1})},x_{\sigma (x_{2})},x_{\sigma (x_{3})}],
x_{\sigma(x_{4})},x_{\sigma (x_{5})}]} =0
\end{align*}
\end{definition}

As in the binary case, there is a functor which makes correspondence to any
partially associative ternary algebra a ternary Lie algebra (see \cite
{Gnedbaye1,Gnedbaye2}).
\begin{proposition}To any partially associative ternary algebra on a
vector space $V$ with ternary operation $m$, one associates a
ternary Lie algebra on $V$ defined $\forall x_{1},x_2, x_{3}\in V$ by the bracket
\begin{equation}
[x_{1},x_{2},x_{3}] =\underset{\sigma \in \mathit{S}_{3}}{\sum Sgn(}\sigma
)m(x_{\sigma (1)}\otimes x_{\sigma (2)}\otimes x_{\sigma (3)})
\end{equation}

\end{proposition}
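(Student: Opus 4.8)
The plan is to verify directly that the proposed bracket
\[
[x_{1},x_{2},x_{3}] =\sum_{\sigma \in \mathcal{S}_{3}} Sgn(\sigma)\,
m(x_{\sigma (1)}\otimes x_{\sigma (2)}\otimes x_{\sigma (3)})
\]
is skew-symmetric and satisfies the generalized Jacobi identity. Skew-symmetry is immediate: for $\tau\in\mathcal{S}_3$, replacing $x_i$ by $x_{\tau(i)}$ permutes the summation index and pulls out a factor $Sgn(\tau)$, since $Sgn$ is a homomorphism on $\mathcal{S}_3$. So the substance is the Jacobi condition
\[
\sum_{\sigma\in\mathcal{S}_3} Sgn(\sigma)\,
[[x_{\sigma(1)},x_{\sigma(2)},x_{\sigma(3)}],x_{\sigma(4)},x_{\sigma(5)}]=0,
\]
where the outer sum is over permutations of $\{1,\dots,5\}$ fixing $\{4,5\}$ setwise (equivalently, the standard cyclic-type sum over $\mathcal{S}_5$ that appears in the definition of ternary Lie algebra — I would match conventions with the definition stated above).

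First I would expand the left-hand side completely in terms of $m$. Substituting the definition of the bracket twice turns each summand into a signed sum of $6\times 6=36$ terms of the form $m(m(x_a\otimes x_b\otimes x_c)\otimes x_d\otimes x_e)$, so in total we get a signed sum over configurations indexed by $\mathcal{S}_3\times\mathcal{S}_3$ together with the outer permutation. The key organizing idea is to use the partial associativity relation
\[
m(m(x_1\otimes x_2\otimes x_3)\otimes x_4\otimes x_5)
=-\,m(x_1\otimes m(x_2\otimes x_3\otimes x_4)\otimes x_5)
-\,m(x_1\otimes x_2\otimes m(x_3\otimes x_4\otimes x_5))
\]
to rewrite every "left-nested" term $m(m(\cdot)\otimes\cdot\otimes\cdot)$ in terms of "middle-nested" and "right-nested" terms, and then to show that after full antisymmetrization over the relevant indices all contributions cancel in pairs. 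Concretely, the total expression is antisymmetric in all five variables (this follows because both the inner and outer brackets are antisymmetric, so $[[\cdot,\cdot,\cdot],\cdot,\cdot]$ composed with the antisymmetrizing sum produces a fully alternating $5$-linear map). An alternating multilinear expression built out of ternary products can then be reduced, using partial associativity, to a canonical form; I would show this canonical form is identically zero by exhibiting, for each surviving term $m(x_a\otimes m(x_b\otimes x_c\otimes x_d)\otimes x_e)$ (say), another term equal to its negative, the pairing coming either from a transposition among $\{a,b,c,d,e\}$ or from re-applying partial associativity in the opposite direction.

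The main obstacle I anticipate is bookkeeping: tracking the signs through the double substitution and the reorganization via partial associativity, and making the cancellation argument airtight rather than "by inspection." To keep it manageable I would introduce notation for the three "slots" of a composite — writing $[x_i\mid x_j x_k x_\ell\mid x_p]$-type shorthand for the position of the inner product — and group the $36$ terms in each outer summand into the three classes (inner product in slot $1$, $2$, or $3$) dictated by partial associativity, so that the defining relation becomes a single linear identity "slot$_1 = -$slot$_2 - $slot$_3$" among these grouped quantities. After substituting this identity and collecting, the antisymmetry over $\mathcal{S}_5$ forces the coefficient of each distinct canonical monomial to vanish. An alternative, cleaner route — which I would mention — is to invoke the functoriality already established in the references \cite{Gnedbaye1,Gnedbaye2}: the bracket is exactly the image of $m$ under the operad morphism from the operad governing ternary Lie algebras to the (suitably shifted) operad governing partially associative ternary algebras, so the generalized Jacobi identity is the image of a relation in the source operad; but for a self-contained note the direct computation above is the honest proof, with the operadic remark relegated to a closing comment.
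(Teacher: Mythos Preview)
The paper does not actually prove this proposition: it states it immediately after the definition of ternary Lie algebras and refers the reader to \cite{Gnedbaye1,Gnedbaye2} for the underlying functor, with no proof environment following. So there is no in-paper argument to compare against; your closing remark about invoking the functoriality established in those references is precisely what the paper does.

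Your proposed direct verification is the standard route and is correct in outline. Two small cautions. First, be careful with the exact form of the generalized Jacobi identity: the paper's definition has evident typographical slips (indices written as $x_{\sigma(x_i)}$, and a sum labelled $\mathcal{S}_3$ while the introduction speaks of $\mathcal{S}_{2n-1}=\mathcal{S}_5$), so you are right to flag that you would match conventions, but you should fix the target identity explicitly before computing rather than leave it ambiguous. Second, your claim that the doubly-bracketed expression becomes fully alternating in all five variables after the outer antisymmetrization is the crux of the cancellation, and it is true only once the outer sum is genuinely over $\mathcal{S}_5$ (not merely over permutations fixing $\{4,5\}$ setwise); make sure the version of the identity you use is the full $\mathcal{S}_5$-antisymmetrized one, since otherwise the pairing argument you sketch does not close up.
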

There is another kind of  ternary  algebras of Lie type, they are
called ternary Nambu  algebra. They appeared naturally in Nambu
 mechanics which is a generalization of classical mechanics.
\begin{definition}
A ternary Nambu algebra is a ternary bracket on a $\mathbb K$-vector space $V$ satisfying a so-called
fundamental or Filippov  identity :
\begin{equation*}
[ x_{1,}x_{2,}[x_{3,}x_{4,}x_{5}]
=[[x_{1,}x_{2,}x_{3}],x_{4,}x_{5}]+[x_{3,}[x_{1,}x_{2,}x_{4}]_{,}x_{5}]+
[ x_{3,}x_{4,}[x_{1,}x_{2,}x_{5}]]
\end{equation*}
$\forall x_{1,}\cdots, x_{5}\in V.$

When the bracket is skew-symmetric the ternary algebra is called ternary Nambu-Lie algebra.

The Lie triple system is defined as a vector space $V$ equipped with a ternary bracket that satisfies the same fundamental identity as a Nambu-Lie bracket but instead of skew-symmetry, it satisfies the condition
\begin{equation*}
[ x_{1},x_{2},x_{3}]+[ x_{2},x_{3},x_{1}]+[ x_{3},x_{1},x_{2}]=0
\end{equation*}
\end{definition}
\begin{example}
The polynomial algebra of  $3$ variables $x_{1},x_{2},x_{3},$ endowed with a ternary operation defined by the functional Jacobian :
\begin{equation*}
\lbrack f_{1},f_{2},f_{3}]=\left\vert
\begin{array}{ccc}
\frac{\delta f_{1}}{\delta x_{1}} & \frac{\delta f_{1}}{\delta x_{2}} &
\frac{\delta f_{1}}{\delta x_{3}} \\
\frac{\delta f_{2}}{\delta x_{1}} & \frac{\delta f_{2}}{\delta x_{2}} &
\frac{\delta f_{2}}{\delta x_{3}} \\
\frac{\delta f_{3}}{\delta x_{1}} & \frac{\delta f_{3}}{\delta x_{2}} &
\frac{\delta f_{3}}{\delta x_{3}}%
\end{array}%
\right\vert
\end{equation*}%

is a ternary Nambu-Lie algebra.
\end{example}
We have also this fundamental example :
\begin{example}

Let $V=\mathbb{R}^4$ be the 4-dimensional oriented euclidian space over $\mathbb{R}$. The bracket of 3 vectors $\overset{\rightarrow }{x_{1}},\overset{\rightarrow }{x_{2}},\overset%
{ \rightarrow }{x_{3}}$   is given by :
\begin{equation*}
\lbrack \overset{\rightarrow }{x_{1}},\overset{\rightarrow }
{x_{2}},\overset{\rightarrow }{x_{3}} ]=\overset{\rightarrow
}{x_{1}}\times \overset{\rightarrow }{x_{2}} \times
\overset{\rightarrow }{x_{3}}=\left\vert
\begin{array}{c}
\begin{array}{cccc}
x_{11} & x_{12} & x_{13} & \overset{\rightarrow }{e_{1}} \\
x_{21} & x_{22} & x_{23} & \overset{\rightarrow }{e_{2}}%
\end{array}
\\
\begin{array}{cccc}
x_{31} & x_{32} & x_{33} & \overset{\rightarrow }{e_{3}}%
\end{array}
\\
\begin{array}{cccc}
x_{41} & x_{42} & x_{43} & \overset{\rightarrow }{e_{4}}%
\end{array}%
\end{array}%
\right\vert
\end{equation*}
where $(x_{1r},...,x_{4r})_{r=1,2,3}$ are the coordinates of  $%
\overset{\rightarrow }{x_{r}}$ with respect to orthonormal basis $\{e_{r}\}$.
Then,  $(V,[.,.,.])$ is a ternary Nambu-Lie algebra.
\end{example}
\begin{remark}
Every ternary Nambu-Lie algebra on $\mathbb{R}^4$ could be deduced from the previous example (see \cite{Gautheron1}).
\end{remark}
\subsection{Homological Algebra of Ternary Algebras}

The basic concepts of homological algebra are those of a complex and
homomorphisms of complexes, defining the category of complexes, see for example \cite{weibel}. A \emph{chain complex} $\mathcal{C} _{.}$ is a sequence $\mathcal{C} =\{\mathcal{C}
_{p}\}_{p}$ of abelian groups or more generally objects of an abelian category and an indexed set $%
\delta=\{\delta_{p}\}_{p}$ of homomorphisms $\delta_{p}:\mathcal{C}
_{p}\rightarrow \mathcal{C} _{p-1}$ such that $\delta_{p-1}\circ \delta_{p}=0$ for all $p$. A chain complex can be considered as a cochain complex by reversing
the enumeration $\mathcal{C} ^{p}=\mathcal{C}
_{-p}$ and $\delta^{p}=\delta_{-p}$. A \emph{cochain complex} $\mathcal{C} $ is a
sequence of abelian groups and homomorphisms
$
\cdots \overset{\delta^{p-1}}{\longrightarrow }\mathcal{C} ^{p}\overset{\delta^{p}}{%
\longrightarrow }\mathcal{C} ^{p+1}\overset{\delta^{p+1}}{\longrightarrow }\cdots
$
with the property $\delta^{p+1}\circ \delta^{p}=0$ for all $p$.

The homomorphisms $\delta^p$ are called \emph{coboundary operators} or \emph{codifferentials}.
 A \emph{cohomology} of a cochain complex $\mathcal{C} $ is given by  the groups $%
H^{p}(\mathcal{C} )=Ker\delta^{p}/Im\delta^{p-1}$.

The elements of $\mathcal{C} ^{p}$ are $p$-cochains, the
elements of $Z^{p}:=Ker \delta^{p}$ are $p$-cocycles, the elements of $%
B^{p}:=Im\delta^{p-1}$ are $p$-coboundaries. Because
$\delta^{p+1}\circ \delta^{p}=0$ for all $p$, we have $0\subseteq
B^{p}\subseteq Z^{p} \subseteq \mathcal{C} ^{p}$ for all $p$. The
$p^{th}$ cohomology group is the quotient $H^p=Z^{p}/B^{p}$.

We introduce in the following the $p$-cochains for a ternary algebra
of associative type $\mathcal{A}=(V,m)$.

\begin{definition}
We call  $p$-\emph{cochain} of a ternary algebra $\mathcal{A}=(V,m)$ a linear map
\begin{equation*}
\varphi :V^{\otimes 2p+1}\longrightarrow V
\end{equation*}

The $p-$cochains set on $V$ is
$
\ \mathcal{C} ^{p}({\mathcal{A} ,\mathcal{A} })=\{\varphi :V^{\otimes 2p+1}=\underset{2p+1%
\text{ }times}{\underbrace{V\otimes V\otimes ...\otimes V}}\longrightarrow
V\}
$
\end{definition}

\begin{remark}
The set $\mathcal{C} ^{p}(\mathcal{A},\mathcal{A})$ is an abelian group.
\end{remark}

We define a circle operation on cochains as usual, that is a map
\begin{equation*}\circ \ : \ \mathcal{C} ^{p}(\mathcal{A},\mathcal{A}) \times
\mathcal{C} ^{q}(\mathcal{A},\mathcal{A})\longrightarrow \mathcal{C}
^{p+q}(\mathcal{A},\mathcal{A}) \quad
(\varphi,\psi)\longmapsto\varphi\circ\psi
\end{equation*} such that
\begin{equation*}\varphi\circ\psi(x_1\otimes \cdots\otimes x_{2p+2q+1})=
\sum_{i=0}^{2p}{\varphi(x_1\otimes \cdots\otimes \psi(x_{i+1}\otimes
\cdots\otimes x_{i+2q+1})\otimes  \cdots\otimes x_{2p+2q+1})}
\end{equation*}

One has a cochain complex for  ternary algebras $\mathcal{A}$ with values in $\mathcal{A}$ if there exists a sequence
of abelian groups and homomorphisms
$
\cdots \overset{\delta ^{p-1}}{\longrightarrow }\mathcal{C} ^{p}(\mathcal{A},\mathcal{A})\overset{%
\delta ^{p}}{\longrightarrow }\mathcal{C} ^{p+1}(\mathcal{A},\mathcal{A})\overset{\delta ^{p+1}}{%
\longrightarrow }\cdots
$
such that for all $p$,  $\delta ^{p+1}\circ \delta ^{p}=0$.

\section{Cohomology of Partially Associative Ternary Algebras}

We have studied in \cite{AM2007} deformations of partially
associative ternary algebras which are intimately linked to
cohomology groups. We have introduced the operators $\delta^{1}$ and
$\delta^{2}$ which should correspond to a complex of partially
associative ternary algebra defining a deformation
cohomology. In the following we recall the definitions of $\delta^{1}$ and $%
\delta^{2}$ and show that it is impossible to extend these operators
to an operator $\delta^{3}$.   As a consequence, we deduce that the operad of partially associative ternary algebras is not Koszul, see \cite{markl,Ginzburg} about Koszulity.

Let ${\mathcal{A} }=(V,m)$ be a partially associative ternary algebra on a $%
\mathbb{K}$-vector space $V.$

\begin{definition}
We call ternary $1$-coboundary operator the map
\begin{equation*}
\delta ^{1}: \mathcal{C} ^{0}({\mathcal{A} ,\mathcal{A} }) \longrightarrow
\mathcal{C} ^{1}({\mathcal{A} ,\mathcal{A} }), \quad f \longmapsto \delta ^{1}f
\end{equation*}
defined by
\begin{equation*}
\delta ^{1}f(x_1\otimes x_2\otimes x_3) =f(m(x_1\otimes x_2\otimes
x_3))-m(f(x_1)\otimes x_2\otimes x_3)- m(x_1\otimes f(x_2)\otimes
x_3))-m(x_1\otimes x_2\otimes f(x_3))
\end{equation*}
\end{definition}

\begin{definition}
We call ternary $2$-coboundary operator  the map
\begin{equation*}
\delta^{2}: \mathcal{C}^{1}({\mathcal{A} ,\mathcal{A} }) \longrightarrow \mathcal{C}^{2}({%
\mathcal{A} ,\mathcal{A} }), \quad \varphi \longmapsto \delta ^{2}\varphi
\end{equation*}
defined by
\begin{eqnarray*}
\delta^{2}\varphi (x_1\otimes x_2\otimes x_3\otimes x_4\otimes x_5)
= m(\varphi (x_1\otimes x_2\otimes x_3)\otimes x_4\otimes x_5)
+ m(x_1\otimes \varphi (x_2\otimes
x_3\otimes x_4)\otimes x_5) \\
+m(x_1\otimes x_2\otimes \varphi (x_3\otimes x_4\otimes x_5))
+ \varphi
(m(x_1\otimes x_2\otimes x_3)\otimes x_4\otimes x_5) \\
+\varphi (x_1\otimes m(x_2\otimes x_3\otimes x_4)\otimes x_5)
+\varphi (x_1\otimes x_2\otimes m(x_3\otimes x_4\otimes x_5))
\end{eqnarray*}
\end{definition}

\begin{remark}
The operator $\delta^{2}$ can also be defined by
\begin{equation*}
\delta^{2}\varphi =\varphi \circ m+m\circ \varphi
\end{equation*}
\end{remark}

\begin{proposition}
We have
\begin{equation*}
\delta ^{2}\circ \delta ^{1}=0
\end{equation*}

\end{proposition}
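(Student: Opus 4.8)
The plan is to compute $\delta^2(\delta^1 f)$ directly for an arbitrary $0$-cochain $f \in \mathcal C^0(\mathcal A,\mathcal A)$ (i.e.\ a linear map $f\colon V\to V$) and show it vanishes as a map $V^{\otimes 5}\to V$. Writing $\varphi = \delta^1 f$, the remark preceding the proposition gives $\delta^2\varphi = \varphi\circ m + m\circ\varphi$, so I would first expand $\varphi\circ m$ and $m\circ\varphi$ on a generic element $x_1\otimes\cdots\otimes x_5$ using the definitions of the circle operation and of $\delta^1$. Each of the three terms of $\varphi\circ m$ (where $\varphi$ is applied to a block containing one $m(x_i\otimes x_{i+1}\otimes x_{i+2})$) splits, via $\delta^1$, into four terms, and similarly each of the three terms of $m\circ\varphi$ splits into four terms; so altogether $\delta^2\delta^1 f$ is an alternating-free sum of $24$ terms, each of the shape $\pm\, (\text{expression built from } m,\, m,\, f)$ or $\pm\, f(\text{expression built from }m,m)$.

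Next I would organize these $24$ terms by the combinatorial pattern of how $m$, $m$ and $f$ are nested. The terms containing $f$ on the outside collect into $f$ applied to the left-hand side of the partial associativity identity, namely $f\big(m(m(x_1\otimes x_2\otimes x_3)\otimes x_4\otimes x_5)+m(x_1\otimes m(x_2\otimes x_3\otimes x_4)\otimes x_5)+m(x_1\otimes x_2\otimes m(x_3\otimes x_4\otimes x_5))\big)$, which is $0$ by the defining axiom of a partially associative ternary algebra (Definition, equation~(5)). The remaining terms, all of the form $\pm\, m(\cdots)$ with an $f$ buried in one of the five slots, should be grouped according to which variable $x_j$ carries the $f$. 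For each fixed $j$ the corresponding terms, after factoring $f(x_j)$ into position, reassemble into (plus or minus) the partial associativity identity applied to the $5$-tuple $(x_1,\dots,f(x_j),\dots,x_5)$ — again $0$ by the axiom. Summing over $j=1,\dots,5$ kills everything, and together with the outer-$f$ cancellation we get $\delta^2\delta^1 f = 0$.

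The bookkeeping is the only real obstacle: one must be careful that the three summands of the circle operation in $\varphi\circ m$ correspond to $i=0,2,4$ (the only shifts for which the inserted ternary block lands on a legitimate triple of consecutive arguments among the seven inputs $x_1\otimes\cdots\otimes x_5$ after $m$ has been inserted — here $2p=2$ for $\varphi$ a $1$-cochain), and likewise for $m\circ\varphi$; mismatched indexing is where such a computation typically goes wrong. I would also double-check the signs: since all the operators here are defined with plus signs (partial associativity has no alternating signature), the cancellation is a genuine cancellation of equal-and-opposite terms produced by the two minus signs in $\delta^1$ interacting with the three-term sums, rather than a signed telescoping. Once the $24$ terms are laid out in a table indexed by (position of the inner $m$, position of $f$), the collapse into six copies of the partial associativity identity is mechanical.
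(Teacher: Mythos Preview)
Your plan is essentially the paper's own proof --- a direct expansion of $\delta^2(\delta^1 f)(x_1\otimes\cdots\otimes x_5)$ into its 24 terms --- only you add a conceptual grouping that the paper omits (the paper simply lists all 24 terms and declares the sum zero). One point of the bookkeeping needs correction, however: after you set aside the three terms $f\big(m(m(\cdots))\big)$ that vanish by partial associativity, the remaining 21 terms are \emph{not} all of the form ``$m(\cdots)$ with $f$ applied to a single $x_j$''. Six of them carry $f$ on an inner $m$-block, namely the three terms
\[
+\,m\big(\ldots,f(m(x_i\otimes x_{i+1}\otimes x_{i+2})),\ldots\big)
\]
contributed by $m\circ\varphi$, together with the identical three terms with a minus sign contributed by $\varphi\circ m$; these cancel in pairs, not via the axiom. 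Only the remaining $15$ terms split, as you describe, into five triples indexed by $j=1,\dots,5$, each equal to $-1$ times the partial-associativity expression evaluated at $(x_1,\dots,f(x_j),\dots,x_5)$. So the tally is $3+6+15=24$: one use of the axiom for the outer-$f$ block, three pairwise cancellations, and five further uses of the axiom --- not six copies of the identity. With that amendment your argument goes through and matches the paper's computation. (Two minor slips: the circle sum for $\varphi\circ m$ with $\varphi$ a $1$-cochain runs over $i=0,1,2$, not $i=0,2,4$; and $\delta^1$ carries three minus signs, not two.)
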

\begin{proof}Let $f$ be a  0-cochain. We compute
 $\delta ^{2}( \delta ^{1} f)$.

We have for all $x_1,x_2,x_3, x_{4},x_{5}\in V$ :
\begin{equation*}
\begin{array}{c}
\delta ^{2}(\delta ^{1} f)(x_1\otimes x_2\otimes x_3\otimes  x_{4}\otimes x_{5})=\\
m(f(m(x_{1}\otimes x_{2}\otimes  x_{3}))\otimes  x_{4}\otimes
x_{5})-m
(m(f(x_{1})\otimes  x_{2}\otimes  x_{3})\otimes  x_{4}\otimes x_{5})-\\
m(m(x_{1}\otimes  f(x_{2})\otimes  x_{3})\otimes  x_{4}\otimes
x_{5})-m
(m(x_{1}\otimes  x_{2}\otimes f(x_{3}))\otimes  x_{4}\otimes x_{5})+\\
m(x_{1}\otimes  f(m(x_{2}\otimes x_{3}\otimes x_{4}))\otimes
x_{5})-m(x_{1}\otimes m(f(x_{2})\otimes  x_{3}\otimes x_{4})\otimes
x_{5})-\\m(x_{1}\otimes m(x_{2}\otimes  f(x_{3})\otimes  x_{4})
\otimes x_{5})-m(x_{1}\otimes m(x_{2}\otimes  x_{3}\otimes f(x_{4}))\otimes x_{5})+ \\
m(x_{1}\otimes  x_{2}\otimes (f(m(x_{3}\otimes x_{4}\otimes
x_{5}))))-m(x_{1}\otimes  x_{2}\otimes m(f(x_{3})\otimes
x_{4}\otimes x_{5})) -\\m(x_{1}\otimes  x_{2}\otimes m(x_{3}\otimes
f(x_{4})\otimes  x_{5}))-m(x_{1}\otimes  x_{2}\otimes m(x_{3}\otimes  x_{4}\otimes f(x_{5})))+\\
f(m(m(x_{1}\otimes  x_{2}\otimes  x_{3})\otimes x_{4}\otimes
x_{5}))-m(f(m(x_{1}\otimes  x_{2}\otimes  x_{3}))\otimes
x_{4}\otimes x_{5})-\\m(m(x_{1}\otimes  x_{2}\otimes  x_{3})\otimes
f(x_{4})\otimes  x_{5})-m (m(x_{1}\otimes  x_{2}\otimes
x_{3})\otimes x_{4}\otimes f(x_{5}))+\\f(m(x_{1}\otimes m
(x_{2}\otimes x_{3}\otimes x_{4})\otimes x_{5}))-m(f(x_{1})\otimes
m(x_{2}\otimes  x_{3}\otimes x_{4}) \otimes  x_{5})-\\m(x_{1}\otimes
f(m(x_{2}\otimes  x_{3}\otimes x_{4}))\otimes  x_{5})-m(x_{1}
\otimes  m(x_{2}\otimes  x_{3}\otimes x_{4})\otimes f(x_{5}))+\\
f(m(x_{1}\otimes x_{2}\otimes  m(x_{3}\otimes x_{4}\otimes
x_{5})))-m(f(x_{1})\otimes  x_{2}\otimes  m(x_{3}\otimes
x_{4}\otimes x_{5}))-\\m(x_{1}\otimes  f(x_{2})\otimes
m(x_{3}\otimes x_{4}\otimes  x_{5}))-m(x_{1}\otimes x_{2}\otimes
f(m(x_{3}\otimes  x_{4}\otimes x_{5})))=0
\end{array}%
\end{equation*}
\end{proof}

The cohomology spaces relative to these coboundary operators are

\begin{definition}
The   $1$-cocycles space of ${\mathcal{A}}$ is
\begin{equation*}
\mathit{Z}^{1}({\mathcal{A} ,\mathcal{A} })=\{f: V\longrightarrow
V\,:\,\delta^{1}f=0\}
\end{equation*}
The   $2$-coboundaries space of ${\mathcal{A} }$ is
\begin{equation*}
\mathit{B}^{2}({\mathcal{A} ,\mathcal{A} }) =\{\varphi :V^{\otimes
3}\longrightarrow V\,:\, \varphi =\delta ^{1}f,f\in \mathcal{C}
^{0}({\mathcal{A} },{\mathcal{A} })\}
\end{equation*}
The   $2$-cocycles space of ${\mathcal{A}}$ is
\begin{equation*}
\mathit{Z}^{2}({\mathcal{A} ,\mathcal{A} }) =\{f:V^{\otimes 3}\longrightarrow
V\,:\,\delta ^{2}f=0\}
\end{equation*}
\end{definition}

\begin{remark}
One has $\mathit{B}^{2}({\mathcal{A} ,\mathcal{A} }){\subset }\mathit{Z}^{2}({\mathcal{A} ,\mathcal{A} }%
) $, because $\delta ^{2}\circ \delta ^{1}=0$. Note also that $\mathit{Z}%
^{1}({\mathcal{A} ,\mathcal{A} })$ corresponds to the derivations space, denoted also  $Der{(\mathcal{A} )}$,  of the partially associative
ternary algebra $%
{\mathcal{A} }$.
\end{remark}

\begin{definition}
We call the $p^{th}$ cohomology group $(p=0,1)$ of the partially associative
ternary algebra ${\mathcal{A} }$ the quotient
\begin{equation*}
\mathit{H}^{p}(\mathcal{A} ,\mathcal{A} )=\frac{\mathit{Z}^{p}(\mathcal{A} ,\mathcal{A} )}{\mathit{B}%
^{p}(\mathcal{A} ,\mathcal{A} )},\qquad p=1,2
\end{equation*}
\end{definition}

The following proposition shows that we cannot extend the 1-cohomology and 2-cohomology corresponding to the operators
$\delta ^{1}$ and $\delta ^{2}$ to a 3-cohomology.

\begin{proposition}
\label{prop} Let $\mathcal{A} =(V,m)$ be a partially associative ternary
algebra.
There is no  3-cohomology extending the 2-cohomology corresponding to the
coboundary operator
\begin{equation*}
\delta ^{2}:%
\begin{array}{ccc}
\mathcal{C} ^{1}({ \mathcal{A} ,\mathcal{A} }) & \longrightarrow & \mathcal{C} ^{2}(%
{ \mathcal{A} ,\mathcal{A} })%
\end{array}%
\end{equation*}
defined for all $x_{1}, x_{2}, x_{3}, x_{4}, x_{5}\in V$ by
\begin{eqnarray*}
\delta ^{2}\varphi (x_{1}\otimes  x_{2}\otimes  x_{3}\otimes
x_{4}\otimes x_{5})=m(\varphi (x_{1}\otimes  x_{2}\otimes
x_{3})\otimes x_{4}\otimes  x_{5})+ m(x_{1}\otimes  \varphi
(x_{2}\otimes x_{3}\otimes  x_{4})\otimes  x_{5})\\
+m(x_{1}\otimes x_{2}\otimes  \varphi (x_{3}\otimes  x_{4}\otimes
x_{5}))+ \varphi ( m(x_{1}\otimes  x_{2}\otimes  x_{3})\otimes
x_{4}\otimes  x_{5})\\  + \varphi ( x_{1}\otimes
m(x_{2}\otimes  x_{3}\otimes  x_{4})\otimes  x_{5})+ \varphi (
x_{1}\otimes  x_{2}\otimes  m(x_{3}\otimes  x_{4}\otimes
x_{5}))
\end{eqnarray*}
\end{proposition}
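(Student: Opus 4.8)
The plan is to argue by contradiction: assume a coboundary operator $\delta ^{3}:\mathcal{C}^{2}(\mathcal{A},\mathcal{A})\to\mathcal{C}^{3}(\mathcal{A},\mathcal{A})$ exists with $\delta^{3}\circ\delta^{2}=0$ and show that it is forced to vanish identically, which disqualifies it as a deformation coboundary operator and rules out the sought $3$-cohomology. First I would pin down the admissible shape of $\delta^{3}$. Just as $\delta^{1}$ and $\delta^{2}$ are assembled from the structure map $m$ and the circle operation, so must $\delta^{3}$ be; the natural operators $\mathcal{C}^{2}(\mathcal{A},\mathcal{A})\to\mathcal{C}^{3}(\mathcal{A},\mathcal{A})$ that are linear in their argument and built from $m$ are, up to scalars, $\psi\mapsto\psi\circ m$ (inserting $m$ into the five arguments of $\psi$) and $\psi\mapsto m\circ\psi$ (inserting $\psi$ into the three arguments of $m$). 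So I would write $\delta^{3}\psi=a\,(\psi\circ m)+b\,(m\circ\psi)$ with $a,b\in\mathbb{K}$, extending the pattern $\delta^{2}\varphi=\varphi\circ m+m\circ\varphi$.

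Next I would compute $\delta^{3}(\delta^{2}\varphi)$ for an arbitrary $1$-cochain $\varphi$. Since $\delta^{2}\varphi=\varphi\circ m+m\circ\varphi$,
\[
\delta^{3}(\delta^{2}\varphi)=a\big[(\varphi\circ m)\circ m+(m\circ\varphi)\circ m\big]+b\big[m\circ(\varphi\circ m)+m\circ(m\circ\varphi)\big].
\]
I would expand each of the four composites as an explicit sum of ``tree monomials'' --- linear maps $V^{\otimes 7}\to V$ built from one occurrence of $\varphi$ and two of $m$ --- and classify them by shape into five families: (P) $\varphi$ at the root with the two $m$'s nested in one of its slots; (QR) $\varphi$ at the root with the two $m$'s in two distinct slots; (U) $m$ at the root, $\varphi$ in one slot, and the second $m$ nested inside that $\varphi$; (VW) $m$ at the root with one $m$-slot and one $\varphi$-slot; (S) $m$ at the root, the second $m$ in one slot, and $\varphi$ nested inside that $m$. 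A direct bookkeeping gives $(\varphi\circ m)\circ m=\sum(\mathrm{P})+2\sum(\mathrm{QR})$, $(m\circ\varphi)\circ m=\sum(\mathrm{U})+\sum(\mathrm{VW})$, $m\circ(\varphi\circ m)=\sum(\mathrm{U})$ and $m\circ(m\circ\varphi)=\sum(\mathrm{S})$, hence
\[
\delta^{3}(\delta^{2}\varphi)=a\sum(\mathrm{P})+2a\sum(\mathrm{QR})+(a+b)\sum(\mathrm{U})+a\sum(\mathrm{VW})+b\sum(\mathrm{S}).
\]

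Then I would use partial associativity to determine the relations among these monomials. The only available relation is $m(m(x_{1}\otimes x_{2}\otimes x_{3})\otimes x_{4}\otimes x_{5})+m(x_{1}\otimes m(x_{2}\otimes x_{3}\otimes x_{4})\otimes x_{5})+m(x_{1}\otimes x_{2}\otimes m(x_{3}\otimes x_{4}\otimes x_{5}))=0$; its only consequences among ``two $m$'s, one $\varphi$'' monomials are obtained by substituting $\varphi$ into one slot of this identity, or by substituting the identity $m\circ m=0$ into one slot of $\varphi$. These consequences involve \emph{only} the families (P), (S) and (VW), and never (U) or (QR). Therefore $\delta^{3}(\delta^{2}\varphi)=0$ forces the coefficients of the unconstrained (U)- and (QR)-monomials to vanish separately: $a+b=0$ and $2a=0$, whence $a=b=0$ (using $\mathrm{char}\,\mathbb{K}=0$) and $\delta^{3}=0$. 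A nontrivial $\delta^{3}$ extending $\delta^{2}$ thus cannot exist, which is the assertion of the proposition; as noted in the Introduction, this obstruction is precisely the non-Koszulity of the operad of partially associative ternary algebras.

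The main obstacle is the honest expansion and classification of the $48$ tree monomials occurring in the four composites --- tracking which monomials coincide and with what multiplicities --- together with the precise description of the relations coming from partial associativity; the conceptual point that makes the argument close is that the families (U) and (QR) are untouched by those relations, so their coefficients cannot be cancelled. As a safeguard against errors in the symbolic expansion, one can instead evaluate $\delta^{3}(\delta^{2}\varphi)$ on the concrete partially associative ternary algebra $V=\mathbb{K}^{2}$ with $m(e_{1}\otimes e_{1}\otimes e_{1})=e_{2}$ and all other products zero, with a suitably chosen $1$-cochain $\varphi$; the computation then becomes finite and fully explicit and exhibits the same conclusion.
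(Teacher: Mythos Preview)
Your overall strategy---write down an ansatz for $\delta^{3}$ built from $m$ and the cochain, impose $\delta^{3}\circ\delta^{2}=0$, and show the coefficients are forced to vanish---matches the paper's. The computation you sketch within your ansatz is correct, and your observation that the families (U) and (QR) are untouched by the partial-associativity relations is exactly the mechanism that kills the coefficients.

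The gap is in the ansatz itself. You assert that the natural operators $\mathcal{C}^{2}\to\mathcal{C}^{3}$ built from $m$ are, up to scalars, $\psi\mapsto\psi\circ m$ and $\psi\mapsto m\circ\psi$; this is not true. Each of the eight individual insertions
\[
\psi\longmapsto m(x_{1}\otimes x_{2}\otimes\psi(\cdots)),\quad
\psi\longmapsto m(x_{1}\otimes\psi(\cdots)\otimes x_{7}),\quad\ldots,\quad
\psi\longmapsto\psi(x_{1}\otimes\cdots\otimes m(x_{5}\otimes x_{6}\otimes x_{7}))
\]
is a perfectly good natural operator, and they are linearly independent. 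The paper accordingly takes the full eight-parameter family
\[
\delta^{3}f=\sum_{i=1}^{3}a_{i}\,(\text{$i$-th $m\circ f$ term})+\sum_{j=4}^{8}a_{j}\,(\text{$(j-3)$-th $f\circ m$ term}),
\]
expands $\delta^{3}(\delta^{2}g)$, reduces modulo partial associativity, and obtains a homogeneous linear system in $a_{1},\dots,a_{8}$ whose only solution is zero. Your two-parameter ansatz $a(\psi\circ m)+b(m\circ\psi)$ is the special case $a_{1}=a_{2}=a_{3}=b$, $a_{4}=\cdots=a_{8}=a$, so ruling it out does not rule out, say, a Hochschild-type alternating-sign formula or any other combination with distinct coefficients. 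To close the argument you need to allow all eight coefficients and show they are all forced to vanish; the bookkeeping is longer but the same families (P), (QR), (U), (VW), (S) organize it, and again the (QR)- and (U)-type constraints (now several of each) do the work.
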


\begin{proof}
We consider a 3-cochain $f$, that is a map $f:V^{\otimes 5}\rightarrow V$, and set
\begin{equation*}
\begin{array}{ccc}
\delta^3 f(x_1\otimes x_2\otimes x_3\otimes x_4\otimes x_5\otimes
x_6\otimes x_7)=
\\ a_1 m(x_1\otimes x_2\otimes f(x_3\otimes x_4\otimes x_5\otimes x_6\otimes x_7))+a_2
m(x_1\otimes f(x_2\otimes x_3\otimes x_4\otimes x_5\otimes x_6)\otimes x_7)+ \\
a_3 m(f(x_1\otimes x_2\otimes x_3\otimes x_4\otimes x_5)\otimes
x_6\otimes x_7)+
a_4f(m(x_1\otimes x_2\otimes x_3)\otimes x_4\otimes x_5\otimes x_6\otimes x_7)+ \\
a_5f(x_1\otimes m(x_2\otimes x_3\otimes x_4)\otimes x_5\otimes
x_6\otimes x_7)+ a_6
f(x_1\otimes x_2\otimes m(x_3\otimes x_4\otimes x_5)\otimes x_6\otimes x_7)+ \\
a_7 f(x_1\otimes x_2\otimes x_3\otimes m(x_4\otimes x_5\otimes
x_6)\otimes x_7)+a_8 f(x_1\otimes x_2\otimes x_3\otimes x_4\otimes
m(x_5\otimes x_6\otimes x_7))
\end{array}
\end{equation*}
where $a_1,\cdots, a_8 \in \mathbb K$.

 Let $g$ be a $2$-cochain, that is a map
$f:V^{\otimes 5}\rightarrow V$. We compute $\delta^3 ( \delta^2 g)
(x_1\otimes x_2\otimes x_3\otimes x_4\otimes x_5\otimes x_6\otimes
x_7)$ and substitute $m(y_1\otimes y_2\otimes m(y_3\otimes
y_4\otimes y_5))$ by
\begin{equation*}-m(y_1\otimes m(y_2\otimes y_3\otimes
y_4)\otimes y_5)-m(m(y_1\otimes y_2\otimes y_3)\otimes y_4\otimes
y_5)\end{equation*}
 Then, we obtain
{\small{
\begin{eqnarray*}
\delta^3 ( \delta^2 g) (x_1\otimes x_2\otimes x_3\otimes x_4\otimes
x_5\otimes x_6\otimes x_7)&=&
(a_7-a_8) g(x_1\otimes x_2\otimes m(x_3\otimes m(x_4\otimes x_5\otimes x_6)\otimes x_7))\\
\ && +(a_6-a_8)
g(x_1\otimes x_2\otimes m(m(x_3\otimes x_4\otimes x_5)\otimes
x_6\otimes x_7))
\\
\  & &+(a_5 +a_8) g(x_1\otimes m(x_2\otimes x_3\otimes x_4)\otimes m(x_5\otimes x_6\otimes x_7))
\\
\  & & +(a_6 -a_7)
g(x_1\otimes m(x_2\otimes m(x_3\otimes x_4\otimes x_5)\otimes
x_6)\otimes x_7)
\\
\  & & +(a_5 -a_7) g(x_1\otimes m(m(x_2\otimes x_3\otimes
x_4)\otimes x_5\otimes x_6)\otimes x_7)
\\ \  & &
+(a_4 +a_8) g(m(x_1\otimes
x_2\otimes x_3)\otimes x_4\otimes m(x_5\otimes x_6\otimes x_7))
\\ \  & &
+(a_4 +a_7) g(m(x_1\otimes x_2\otimes x_3)\otimes m(x_4\otimes
x_5\otimes x_6)\otimes x_7)
\\ \  & &+
(a_5 -a_6) g(m(x_1\otimes m(x_2\otimes x_3\otimes x_4)\otimes x_5)\otimes x_6\otimes x_7)
\\
\  & &+
(a_4 -a_6) g(m(m(x_1\otimes x_2\otimes x_3)\otimes x_4\otimes
x_5)\otimes x_6\otimes x_7)
\\
\  & &
+(a_1 +a_8) m(x_1\otimes x_2\otimes
g(x_3\otimes x_4\otimes m(x_5\otimes x_6\otimes x_7)))
\\
\  & &  +(a_1 +a_7) m(x_1\otimes x_2\otimes g(x_3\otimes m(x_4\otimes x_5\otimes x_6)\otimes x_7))
\\\  & &
+(a_1 +a_6)
m(x_1\otimes x_2\otimes g(m(x_3\otimes x_4\otimes x_5)\otimes x_6\otimes x_7))
\\ \  & &
+(a_2 +a_7) m(x_1\otimes g(x_2\otimes x_3\otimes m(x_4\otimes
x_5\otimes x_6))\otimes x_7)
\\\  & &
+(a_2 +a_6) m(x_1\otimes g(x_2\otimes
m(x_3\otimes x_4\otimes x_5)\otimes x_6)\otimes x_7)
\\\  & &
+(a_2 +a_5) m(x_1\otimes g(m(x_2\otimes x_3\otimes x_4)\otimes x_5\otimes x_6)\otimes x_7)
\\ \  & &
+(a_5-a_1)
m(x_1\otimes m(x_2\otimes x_3\otimes x_4)\otimes g(x_5\otimes
x_6\otimes x_7))
\\\  & &
+(a_2-a_1) m(x_1\otimes m(x_2\otimes x_3\otimes g(x_4\otimes x_5\otimes x_6))\otimes x_7)
\\ \  & &
+(a_2-a_1)
m(x_1\otimes m(x_2\otimes g(x_3\otimes x_4\otimes x_5)\otimes
x_6)\otimes x_7)
\\\  & &
+(a_2-a_8) m(x_1\otimes m(g(x_2\otimes x_3\otimes x_4)\otimes x_5\otimes x_6)\otimes x_7)
\\ \  & &
+(a_7-a_8)
m(g(x_1\otimes x_2\otimes x_3)\otimes m(x_4\otimes x_5\otimes
x_6)\otimes x_7)
\\\  & &
+(a_3 +a_6) m(g(x_1\otimes x_2\otimes m(x_3\otimes
x_4\otimes x_5))\otimes x_6\otimes x_7)
\\\  & &
+(a_3 +a_5) m(g(x_1\otimes
m(x_2\otimes x_3\otimes x_4)\otimes x_5)\otimes x_6\otimes
x_7)
\\\  & &
+(a_3 +a_4) m(g(m(x_1\otimes x_2\otimes x_3)\otimes x_4\otimes
x_5)\otimes x_6\otimes x_7)
\\\  & &
+(a_4-a_1) m(m(x_1\otimes x_2\otimes
x_3)\otimes x_4\otimes g(x_5\otimes x_6\otimes x_7))
\\\  & &
+(a_4-a_1)
m(m(x_1\otimes x_2\otimes x_3)\otimes g(x_4\otimes x_5\otimes
x_6)\otimes x_7)
\\\  & &
+(a_3-a_1) m(m(x_1\otimes x_2\otimes g(x_3\otimes
x_4\otimes x_5))\otimes x_6\otimes x_7)
\\\  & &
+(a_3-a_8) m(m(x_1\otimes
g(x_2\otimes x_3\otimes x_4)\otimes x_5)\otimes x_6\otimes x_7)
\\\  & &
+(a_3-a_8) m(m(g(x_1\otimes x_2\otimes x_3)\otimes x_4\otimes
x_5)\otimes x_6\otimes x_7)
\\\  & &  =0
\end{eqnarray*}
}}
The equation is satisfied for all $x_1,x_2,x_3,x_4,x_5,x_6,x_7\in
V$ if and only if  $a_{1},\cdots,a_8$ are all equal to 0.

\end{proof}

\begin{corollary}
A deformation cohomology of  partially associative ternary algebras
doesn't exist. Then, the operad of the partially associative ternary
algebras $pAss^{(3)}$ is not a Koszul operad.
\end{corollary}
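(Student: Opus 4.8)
The plan is to derive both assertions of the Corollary from Proposition~\ref{prop} together with the standard dictionary relating Koszulity of an operad to the existence of a deformation cohomology. For the first assertion I would argue that a \emph{deformation cohomology} of partially associative ternary algebras must be a cochain complex $(\mathcal{C}^{\bullet}(\mathcal{A},\mathcal{A}),\delta^{\bullet})$ with $\mathcal{C}^{p}(\mathcal{A},\mathcal{A})=\mathrm{Hom}(V^{\otimes 2p+1},V)$ whose low-degree coboundary operators are the maps $\delta^{1}$ and $\delta^{2}$ attached in \cite{AM2007} to the formal deformation theory of $\mathcal{A}$ (so that $\mathit{H}^{2}$ classifies infinitesimal deformations up to equivalence and the obstruction to integrating a $2$-cocycle lives one degree higher), and whose $\delta^{3}$ is, like $\delta^{1}$ and $\delta^{2}$, assembled from iterated substitutions of $m$ into the cochain and of the cochain into $m$; such a $\delta^{3}$ is then necessarily a member of the $8$-parameter family written out in the proof of Proposition~\ref{prop} and must satisfy $\delta^{3}\circ\delta^{2}=0$. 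Proposition~\ref{prop} shows the only such map is $\delta^{3}=0$, which does not define a deformation cohomology, since the cubic obstruction map attached to a $2$-cocycle is nonzero in general; hence no deformation cohomology exists.

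For the second assertion I would invoke the theory of Ginzburg--Kapranov \cite{Ginzburg} and Markl \cite{markl}: a quadratic operad $\mathcal{P}$ is Koszul if and only if its Koszul complex is acyclic, and when $\mathcal{P}$ is Koszul the cohomology of a $\mathcal{P}$-algebra $\mathcal{A}$ with coefficients in itself --- the cohomology that controls its deformations --- is computed by the cobar construction on the Koszul dual cooperad of $\mathcal{P}$, which is a genuine cochain complex. For $\mathcal{P}=pAss^{(3)}$ the space of $p$-cochains of this complex is built from the arity-$(2p+1)$ part of the dual and from $V$, i.e. it is exactly $\mathrm{Hom}(V^{\otimes 2p+1},V)=\mathcal{C}^{p}(\mathcal{A},\mathcal{A})$, and its differential is induced by operadic composition of $m$ against the (co)operad structure, hence lies in the family of Proposition~\ref{prop} and coincides in low degrees with $\delta^{1}$ and $\delta^{2}$. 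Thus Koszulity of $pAss^{(3)}$ would force a square-zero $\delta^{3}$ extending $\delta^{2}$ to exist, and Proposition~\ref{prop} says it does not; therefore $pAss^{(3)}$ is not Koszul. (One could alternatively check that the numerical Koszulity criterion --- the generating functions of $pAss^{(3)}$ and of its Koszul dual being mutually inverse under the relevant ternary substitution --- fails, but the deformation-theoretic route is shorter and reuses Proposition~\ref{prop} directly.)

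The step I expect to require genuine care is the structural input used in the second paragraph: one must identify the arity-$7$ component of the free ternary operad modulo the ideal generated by the partial-associativity relation (equivalently, the relevant graded piece of the Koszul dual cooperad) and verify that the cobar differential there is pinned down, up to the scalars $a_{1},\dots,a_{8}$, by operadic composition --- so that Koszulity genuinely forces $\delta^{3}$ into the $8$-parameter family handled by Proposition~\ref{prop}. Once that identification is in place, Koszulity demands one of these differentials to be a square-zero extension of $\delta^{2}$, Proposition~\ref{prop} produces the contradiction, and everything else is bookkeeping.
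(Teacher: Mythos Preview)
Your proposal is essentially correct and aligns with the paper's approach, which in fact offers no proof at all: the Corollary is stated as an immediate consequence of Proposition~\ref{prop}, with the link to Koszulity left implicit and only clarified by the subsequent Remark about the degree of the generating operation. Your sketch makes explicit precisely the reasoning the authors leave unstated---that a Koszul operad would furnish, via the cobar construction on its Koszul dual cooperad, a deformation complex whose low-degree differentials are the given $\delta^{1},\delta^{2}$ and whose $\delta^{3}$ necessarily sits in the $8$-parameter family---and you correctly flag the one genuinely delicate point, namely pinning down the arity-$7$ piece of the dual cooperad and the resulting cobar differential so that Proposition~\ref{prop} applies.

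One caution worth recording: the cobar construction on the Koszul dual cooperad is a dg-operad with $d^{2}=0$ \emph{regardless} of Koszulity, so the contradiction does not come from ``Koszulity forces $\delta^{3}\circ\delta^{2}=0$'' in isolation. The real content---hinted at in the paper's Remark---is that when the ternary generator sits in degree~$0$, the suspension in the Koszul dual shifts degrees so that the cobar differential no longer agrees with the deformation-theoretic $\delta^{2}$ (which has all plus signs, dictated by the partial-associativity relation and by the shape of infinitesimal deformations). Koszulity would force these two complexes to coincide, and Proposition~\ref{prop} then delivers the contradiction. This is exactly the ``step requiring genuine care'' you isolate; once it is handled, the rest is indeed bookkeeping.
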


\begin{remark}
In \cite{Hoffbeck}, it is shown that the operad of totally
associative ternary algebras is Koszul because it has a
Poincar\'e-Birkhoff-Witt basis. Moreover its dual, the operad of
partially associative ternary algebras, is also Koszul when the
operations are in degree one. See also \cite{GozeRemm} for
constructions in this case and the recent preprint \cite{Remm2008Koszul}. The corollary claims that the operad is
not  a Koszul operad when the  operations are in degree zero.
\end{remark}
\begin{remark}
Using the same approach, we can show that the alternate partially
associative ternary algebras of first and second kind do not carry a
deformation cohomology as well, then their operads are not koszul
operads.
\end{remark}

\section{Cohomology of Weak Totally Associative Ternary Algebras}

In this section, we generalize to $p$-cohomology, for all $p$, the
1-cohomology and 2-cohomology of weak totally associative ternary
algebra defined by Takhtajan (see \cite{Takhtajan1}). Let
$\mathcal{A}=(V,m)$ be a weak totally associative ternary algebras on
a $\mathbb K$-vector space $V.$

The $1-$coboundary and $2-$coboundary operators for weak totally associative
ternary algebras $\mathcal{A}$ are defined as follows

\begin{definition}
A  $1-$coboundary operator of a weak totally associative ternary algebra
  $\mathcal{A}=(V,m)$ is the map
\begin{eqnarray*} \delta^{1}:  \mathcal{C} ^{1}({\mathcal{A},\mathcal{A}}) &
\longrightarrow &
\mathcal{C} ^{2}({\mathcal{A} ,\mathcal{A} }) \\
 f & \longmapsto & \delta ^{1}f%
\end{eqnarray*}
defined for all $x_1,x_2,x_3 \in V$ by
\begin{eqnarray*}
\delta^{1}f(x_1\otimes x_2\otimes x_3)&=&m(f(x_1)\otimes x_2\otimes
x_3)+m(x_1\otimes f(x_2)\otimes x_3)) \\\ & &+
m(x_1\otimes x_2\otimes f(x_3))-f(m(x_1\otimes x_2\otimes x_3))
\end{eqnarray*}

A  $2-$coboundary operator of a weak totally associative ternary algebra ${\mathcal{A} }$ is the map
\begin{equation*}
\begin{array}{cccc}
\delta^{2}: & \mathcal{C} ^{2}({\mathcal{A} ,\mathcal{A} }) & \longrightarrow &
\mathcal{C} ^{3}({\mathcal{A} ,\mathcal{A}}) \\
& \varphi & \longmapsto & \delta ^{2}\varphi%
\end{array}%
\end{equation*}
defined for all $x_1,\cdots,x_5 \in V$ by
\begin{eqnarray*}
\delta^{2}\varphi (x_1\otimes x_2\otimes x_3\otimes x_4\otimes
x_5)=m(x_1\otimes x_2\otimes \varphi (x_3\otimes x_4\otimes
x_5))-m(\varphi (x_1\otimes x_2\otimes
x_3)\otimes x_4\otimes x_5)   \\
+\varphi (x_1\otimes x_2\otimes m(x_3\otimes x_4\otimes
x_5))-\varphi ( m(x_1\otimes x_2\otimes x_3)\otimes x_4\otimes
x_5) &
\end{eqnarray*}
\end{definition}

\begin{remark}
One can easily show that $\delta ^{2}\circ \delta ^{1}=0$. Indeed,
\begin{equation*}
\begin{array}{ll}
\delta ^{2}\circ \delta ^{1}(f)(x_1\otimes x_2\otimes x_3\otimes
x_4\otimes
x_5)=m(x_1\otimes x_2\otimes m(f(x_3)\otimes x_4\otimes x_5)) &  \\+
m(x_1\otimes x_2\otimes m(x_3\otimes f(x_4)\otimes
x_5))+m(x_1\otimes
x_2\otimes m(x_3\otimes x_4\otimes f(x_5))) &  \\-
m(x_1\otimes f(x_2)\otimes m(x_3\otimes x_4\otimes
x_5))-m(x_1\otimes
f(m(x_2\otimes x_3\otimes x_4))\otimes x_5) &  \\-
m(m(x_1\otimes x_2\otimes x_3)\otimes f(x_4)\otimes
x_5)-m(m(x_1\otimes
x_2\otimes x_3)\otimes x_4\otimes f(x_5)) &  \\+
f(m(m(x_1\otimes x_2\otimes x_3)\otimes x_4\otimes
x_5))+m(f(x_1)\otimes
x_2\otimes m(x_3\otimes x_4\otimes x_5)) &  \\+
m(x_1\otimes f(x_2)\otimes m(x_3\otimes x_4\otimes
x_5))+m(x_1\otimes
x_2\otimes f(m(x_3\otimes x_4\otimes x_5))) &  \\-
f(m(x_1\otimes x_2\otimes m(x_3\otimes x_4\otimes
x_5)))-m(m(f(x_1)\otimes x_2\otimes x_3)\otimes x_4\otimes x_5) &  \\-
m(m(x_1\otimes f(x_2)\otimes x_3)\otimes x_4\otimes
x_5)-m(m(x_1\otimes
x_2\otimes f(x_3))\otimes x_4\otimes x_5) &  \\+
m(f(m(x_1\otimes x_2\otimes x_3))\otimes x_4\otimes
x_5)+f(m(x_1\otimes
m(x_2\otimes x_3\otimes x_4)\otimes x_5)) &  \\
-m(x_1\otimes m(x_2\otimes x_3\otimes f(x_4))\otimes
x_5)-m(x_1\otimes
m(x_2\otimes x_3\otimes x_4)\otimes f(x_5)) &  \\+
m(x_1\otimes m(x_2\otimes f(x_3)\otimes x_4)\otimes
x_5)-m(x_1\otimes
m(f(x_2)\otimes x_3\otimes x_4)\otimes x_5) &  \\
-m(f(x_1)\otimes m(x_2\otimes x_3\otimes x_4)\otimes x_5)=0 &
\end{array}%
\end{equation*}
\end{remark}

We introduce for weak associative ternary algebras the following
generalized coboundary map.
\begin{definition}
Let  $f$ be a  $(p-1)$-cochain of a weak associative ternary algebra
$\mathcal{A}=(V,m) $ and  $f\in \mathcal{C} ^{p-1}(\mathcal{A}, \mathcal{A})$ that is  $f:V^{\otimes
2p-1}\longrightarrow V$ . We set
\begin{eqnarray*}
\delta ^{p}f\left( x_{1}\otimes...\otimes x_{2p+1}\right) &=&m\left(
x_{1}\otimes x_{2}\otimes f\left( x_{3}\otimes ...\otimes
x_{2p+1}\right) \right) +  \\ \ & & \sum_{i=1}^{p}\left( -1\right)
^{i}f\left( x_{1}\otimes ...\otimes m\left( x_{2i-1}\otimes
x_{2i}\otimes x_{2i+1}\right) \otimes ...\otimes x_{2p+1}\right) +
 \\\ & &
\left( -1\right) ^{p+1}m\left( f\left( x_{1}\otimes ...\otimes
x_{2p-1}\right)
\otimes x_{2p}\otimes x_{2p+1}\right) %
\end{eqnarray*}%
\end{definition}
In particular we have
\begin{equation*}
\begin{array}{ll}
\delta ^{3}\varphi (x_{1}\otimes x_{2}\otimes x_{3}\otimes x_{4}\otimes
x_{5}\otimes x_{6}\otimes x_{7})=m(\varphi (x_{1}\otimes x_{2}\otimes
x_{3}\otimes x_{4}\otimes x_{5})\otimes x_{6}\otimes x_{7})- &  \\
\varphi ( m(x_{1}\otimes x_{2}\otimes x_{3})\otimes x_{4}\otimes
x_{5}\otimes x_{6}\otimes x_{7})+\varphi ( x_{1}\otimes x_{2}\otimes
m(x_{3}\otimes x_{4}\otimes x_{5})\otimes x_{6}\otimes x_{7})- &  \\
\varphi ( x_{1}\otimes x_{2}\otimes x_{3}\otimes x_{4}\otimes
m(x_{5}\otimes x_{6}\otimes x_{7}))+m(x_{1}\otimes x_{2}\otimes
\varphi (x_{3}\otimes x_{4}\otimes x_{5}\otimes x_{6}\otimes x_{7})) & %
\end{array}%
\end{equation*}
\begin{proposition}
We have  $ \delta ^{p+1}\circ \delta ^{p}=0$ for all $p\geq1$.
\end{proposition}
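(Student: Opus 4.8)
The plan is to recognise $\delta^{p}$ as the alternating sum of $p+2$ ``coface-type'' operators and to reduce $\delta^{p+1}\circ\delta^{p}=0$ to a family of cosimplicial identities among them, most of which are purely combinatorial and the rest exactly instances of the weak associativity axiom. For a $(p-1)$-cochain $f$ I would name the three shapes of term occurring in $\delta^{p}f$:
\[
d_{0}f(x_{1}\otimes\cdots\otimes x_{2p+1})=m\big(x_{1}\otimes x_{2}\otimes f(x_{3}\otimes\cdots\otimes x_{2p+1})\big),
\]
\[
d_{i}f(x_{1}\otimes\cdots\otimes x_{2p+1})=f\big(x_{1}\otimes\cdots\otimes m(x_{2i-1}\otimes x_{2i}\otimes x_{2i+1})\otimes\cdots\otimes x_{2p+1}\big),\quad 1\le i\le p,
\]
\[
d_{p+1}f(x_{1}\otimes\cdots\otimes x_{2p+1})=m\big(f(x_{1}\otimes\cdots\otimes x_{2p-1})\otimes x_{2p}\otimes x_{2p+1}\big),
\]
so that $\delta^{p}=\sum_{i=0}^{p+1}(-1)^{i}d_{i}$ directly from the definition; each $d_{i}\colon\mathcal{C}^{p-1}(\mathcal{A},\mathcal{A})\to\mathcal{C}^{p}(\mathcal{A},\mathcal{A})$, and I write $d_{i}^{[p]}$ when the source matters.

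The core of the proof is to establish the cosimplicial identities $d_{j}^{[p+1]}\circ d_{i}^{[p]}=d_{i}^{[p+1]}\circ d_{j-1}^{[p]}$ for $0\le i<j\le p+2$. They split into two types according to how the two applications of $m$ interact. If the two are applied to disjoint ranges of arguments — which is the case for the vast majority of pairs — the identity is a bookkeeping statement: the two $m$'s act in independent regions, and the shift $d_{j}\mapsto d_{j-1}$ merely records the change of enumeration produced by the first contraction; no hypothesis on $m$ is used. The remaining pairs are the ``overlapping'' ones, in which the two copies of $m$ share an argument or one is nested inside the other, namely the adjacent pairs $d_{i+1}^{[p+1]}\circ d_{i}^{[p]}=d_{i}^{[p+1]}\circ d_{i}^{[p]}$ for $0\le i\le p+1$ (these include the boundary cases $d_{1}d_{0}$ and $d_{p+2}d_{p+1}$) together with the outermost pair $d_{p+2}^{[p+1]}\circ d_{0}^{[p]}=d_{0}^{[p+1]}\circ d_{p+1}^{[p]}$. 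For each of these, expanding the two sides produces $m\big(y_{1}\otimes y_{2}\otimes m(y_{3}\otimes y_{4}\otimes y_{5})\big)$ on one side and $m\big(m(y_{1}\otimes y_{2}\otimes y_{3})\otimes y_{4}\otimes y_{5}\big)$ on the other, where some of the $y_{k}$ are values of $f$; these agree precisely by the weak total associativity of $m$. I would note that the ``middle'' associativity pattern $m(x_{1}\otimes m(x_{2}\otimes x_{3}\otimes x_{4})\otimes x_{5})$ never arises, so the weak axiom genuinely suffices.

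Granting the identities, the proof finishes with the standard reindexing: writing $\delta^{p+1}\circ\delta^{p}=\sum_{a,b}(-1)^{a+b}\,d_{a}^{[p+1]}d_{b}^{[p]}$ and splitting according to $a>b$ versus $a\le b$, the identities rewrite the $a>b$ part as $\sum_{a>b}(-1)^{a+b}\,d_{b}^{[p+1]}d_{a-1}^{[p]}$; the substitution $a'=b$, $b'=a-1$ converts this into $-\sum_{a'\le b'}(-1)^{a'+b'}\,d_{a'}^{[p+1]}d_{b'}^{[p]}$, cancelling the $a\le b$ part term by term, whence $\delta^{p+1}\circ\delta^{p}=0$. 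I expect the main obstacle, and the only point requiring genuine care, to be the honest verification of the cosimplicial identities at the boundary: making sure that the operators $d_{0}$ and $d_{p+1}$ — which wrap $f$ inside an $m$ rather than inserting an $m$ inside $f$ — interact correctly with the middle operators $d_{1},\dots,d_{p}$ and with one another, that precisely the overlapping pairs listed above are the ones invoking weak associativity, and that the index arithmetic in the shift $d_{j}\mapsto d_{j-1}$ remains consistent across the three operator types; the disjoint pairs are then routine.
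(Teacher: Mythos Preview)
Your argument is correct, and it takes a different route from the paper. The paper frames the proof as an induction on $p$ but in practice just writes out the full expansion of $\delta^{p+1}(\delta^{p}\varphi)(x_{1}\otimes\cdots\otimes x_{2p+3})$ as a long list of terms, grouped by shape, and asserts that the sum vanishes; the induction hypothesis is stated but never visibly invoked in the displayed computation. You instead organise $\delta^{p}$ as the alternating sum $\sum_{i=0}^{p+1}(-1)^{i}d_{i}$ of coface-type operators and reduce everything to the identities $d_{j}d_{i}=d_{i}d_{j-1}$ for $i<j$, after which the vanishing of $\delta^{p+1}\circ\delta^{p}$ is the standard reindexing argument for cosimplicial objects. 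Your decomposition makes transparent exactly which pairs are purely combinatorial and which ones (the adjacent pairs $d_{i+1}d_{i}=d_{i}d_{i}$ and the outermost pair $d_{p+2}d_{0}=d_{0}d_{p+1}$) actually consume the weak associativity axiom, and it explains cleanly why the ``middle'' pattern $m(x_{1}\otimes m(x_{2}\otimes x_{3}\otimes x_{4})\otimes x_{5})$ never appears --- a point the paper's brute expansion leaves implicit. The paper's approach has the virtue of being self-contained for a reader unfamiliar with simplicial machinery; yours is shorter, more structural, and immediately generalises to any situation where one can write down such cofaces.
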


\begin{proof}
We have $\delta ^{2}\circ \delta ^{1}=0.$  Assume  $\delta ^{p}\circ
\delta ^{p-1}=0.$ We have to show that $ \delta ^{p+1}\circ \delta
^{p}=0. $

Let $\varphi$ be a $p$-cochain and $x_{1},
..., x_{2p+3}\in V$.
\begin{equation*}
\begin{array}{c}
\delta ^{p}\varphi(x_{1}\otimes  ...\otimes  x_{2p+1})= \\m(x_{1}\otimes  x_{2}\otimes
\varphi (x_{3}\otimes ...\otimes  x_{2p+1}))+(-1)^{p+1}m(\varphi (x_{1}\otimes  ...\otimes
x_{2p-1})\otimes x_{2p}\otimes  x_{2p+1}) +\\ \underset{i=1}{\overset{p}{\sum
}}(-1)^{i}\varphi (x_{1}\otimes  ...\otimes  m(x_{2i-1}\otimes  x_{2i}\otimes  x_{2i+1})\otimes
...\otimes  x_{2p+1})
\end{array}%
\end{equation*}
Then
$ \delta ^{p+1} (\delta ^{p}\varphi)
(x_{1}\otimes  ...\otimes  x_{2p+3})\ $ vanishes. Indeed
{\small{
\begin{flalign*}
& \delta ^{p+1} (\delta ^{p}\varphi)
(x_{1}\otimes  ...\otimes  x_{2p+3})\ =\\
& m(x_{1}\otimes  x_{2}\otimes  \delta ^{p}\varphi (x_{3}\otimes
...\otimes  x_{2p+3}))+(-1)^{p+2}m(\delta ^{p}\varphi (x_{1}\otimes  ...\otimes  x_{2p+1})\otimes  x_{2p+2}\otimes  x_{2p+3})+ \\
& \underset{k=1}{\overset{p+1}{\sum }}(-1)^{k}\delta ^{p}\varphi
(x_{1}\otimes  ...\otimes  m(x_{2k-1}\otimes  x_{2k}\otimes  x_{2k+1})\otimes  ...\otimes
x_{2p+3})\\
& =m(x_{1}\otimes  x_{2}\otimes  m(x_{3}\otimes x_{4}\otimes \varphi
(x_{5}\otimes ...\otimes x_{2p+3})))+\\
& (-1)^{p+1}m(x_{1}\otimes
x_{2}\otimes m(\varphi(x_{3}\otimes  ...\otimes  x_{2p+1})\otimes x_{2p+2}\otimes x_{2p+3})) +
\\
& \underset{i=1}{\overset{p}{\sum
}}(-1)^{i}m(x_{1}\otimes x_{2}\otimes \varphi(x_{3}\otimes ...\otimes m( x_{2i+1}\otimes
x_{2i+2}\otimes x_{2i+3})\otimes  ...\otimes x_{2p+3}))+
\\
& (-1)^{p+2}m(m(x_{1}\otimes
x_{2}\otimes m(\varphi(x_{3}\otimes  ...\otimes  x_{2p+1})\otimes x_{2p+2}\otimes x_{2p+3}))+
\\
& (-1)^{2p+3}m(m(\varphi(x_{1}\otimes  ...\otimes x_{2p-1})\otimes x_{2p}\otimes
x_{2p+1})\otimes x_{2p+2}\otimes x_{2p+3})+
 \\
&
 (-1)^{p+2}\underset{i=1}{\overset{p}{\sum }}(-1)^{i}m(\varphi(x_{1}\otimes ...\otimes m(x_{2i-1}\otimes x_{2i}\otimes x_{2i+1})\otimes ...\otimes x_{2p+1})\otimes  x_{2p+2}\otimes x_{2p+3})+
 \\
&
\underset{k=1}{\overset{p+1}{\sum
}}(-1)^{k}m(x_{1}\otimes x_{2}\otimes \varphi(x_{3}\otimes ...\otimes m(x_{2k-1}\otimes x_{2k}\otimes x_{2k+1})\otimes ...\otimes x_{2p+3}))+
\\
&
\underset{k=1}
{\overset{p+1}{\sum }}(-1)^{k}(-1)^{p+1}m(\varphi(x_{1}\otimes ...\otimes m(x_{2k-1}\otimes x_{2k}\otimes x_{2k+1})\otimes ...\otimes x_{2p+1})\otimes x_{2p+2}\otimes x_{2p+3})+
\\
&
\underset{k=3}{\overset{p+1}{\sum }}\underset{i=1}{\overset{k-2}{\sum }}(-1)^{k+i}\varphi (x_{1}\otimes ...\otimes m(x_{2i-1}\otimes x_{2i}\otimes x_{2i+1})\otimes ...\otimes m(x_{2k-1}\otimes x_{2k}\otimes x_{2k+1})\otimes ...\otimes x_{2p+3})+
\\
&
\underset{k=1}{\overset{p}{\sum }}(-1)^{2k}\varphi (x_{1}\otimes ...\otimes m(m(x_{2k-1}\otimes x_{2k}\otimes x_{2k+1})\otimes x_{2k+2}\otimes x_{2k+3})\otimes ...\otimes x_{2p+3})+
\\
&
\underset{k=1}{\overset{p}{\sum }}(-1)^{2k+1}\varphi (x_{1}\otimes ...\otimes m(x_{2k-1}\otimes x_{2k}\otimes m(x_{2k+1}\otimes x_{2k+2}\otimes x_{2k+3}))\otimes ...\otimes x_{2p+3})+
\\
&
\underset{k=1}{\overset{p-2}{\sum
}}\underset{i=2k+2}{\overset{p+1}{\sum }}(-1)^{k+i+1}\varphi
(x_{1}\otimes ...\otimes m(x_{2k-1}\otimes x_{2k}\otimes x_{2k+1})\otimes ...\otimes m(x_{2i-1}\otimes x_{2i}\otimes x_{2i+1})\otimes ...\otimes x_{2p+3})
\\
&
=0
\end{flalign*}
}}
\end{proof}

\section{Takhtajan's Construction}

In this section, we aim to extend to ternary algebras of associative type a process introduced by Takhtajan to construct a complex of ternary
algebras  starting from a complex of
binary algebras (see \cite{Takhtajan1}). Let $(V,m)$ be  a ternary algebra  of a given
type. We associate to it  a binary algebra on $%
W=V\otimes V$ and a map  $\Delta$.
Assume that $(\mathcal{C} ,\delta )$ is a complex for the ternary algebras and   $(M,d)$ be a complex for the binary algebras.

We define a map $\Delta$ such that  $\Delta_p$  associates to any
$p$-cochain on $V$ a $p$-cochain on $W$. It  is defined   by
\begin{equation*}
\begin{array}{cccc}
\Delta _{0} & :\mathcal{C} ^{0}=Hom(V,V) & \longrightarrow &
M^{0}=Hom(W,W)
\\
& f & \longmapsto & \Delta _{0}(f)%
\end{array}%
\end{equation*}%
such that for example  $\Delta _{0}(f)(x_{1}\otimes x_{2})=f(x_{1})\otimes
x_{2}+\alpha \ x_{1}\otimes f(x_{2}),$ \ \  $\forall x_{i}\in V$  with $\alpha\in \mathbb{K}$.

One extends this operation to
\begin{equation*}
\begin{array}{cccc}
\Delta_p & :\mathcal{C} ^{p}=Hom(V^{\otimes 2p+1},V) &
\longrightarrow &
M^{p}=Hom(W^{p+1},W) \\
& \varphi & \longmapsto & \Delta_p \varphi %
\end{array}%
\end{equation*}
defined for example, using the remark that
$
W^{\otimes p+1}\cong V^{\otimes 2p+2},
$
 by
\begin{equation*}
\Delta_p \varphi (y_{1}\otimes ...\otimes y_{2p+2})=\varphi
(y_{1}\otimes ...\otimes y_{2p+1})\otimes y_{2p+2}+\alpha \  y_{1}\otimes
\varphi (y_{2}\otimes ...\otimes y_{2p+2})
\end{equation*}

Let us assume  that one has a complex  $(M,d)$ :

\begin{equation*}
\cdots\longrightarrow M^{p-1}\overset{d^{p-1}}{\longrightarrow }M^{p}\overset{d^{p}}{%
\longrightarrow }M^{p+1}\longrightarrow\cdots
\end{equation*}

i.e. for all $p$,
$
\ d^{p}\circ d^{p-1}=0.
$

Consider for any $p>0,$ the linear maps $ \delta ^{p}:\mathcal{C}
^{p}\longrightarrow \mathcal{C} ^{p+1}\text{ } $
 satisfying
\begin{equation*}
\Delta_{p+1} \circ \delta ^{p}=d^{p}\circ \Delta_p ,\text{ \
}\forall p
\end{equation*}

The equality is well defined.

Indeed, one has for $p\geq1$
\begin{equation*}
\mathcal{C} ^{p}\overset{\Delta_p }{\longrightarrow }M^{p}\overset{d^{p}}{%
\longrightarrow }M^{p+1}
\quad
\text{ and }
\quad
\mathcal{C} ^{p}\overset{\delta ^{p}}{\longrightarrow }\mathcal{C} ^{p+1}%
\overset{\Delta_{p+1} }{\longrightarrow }M^{p+1}
\end{equation*}

\begin{lemma}
Let $p>1$. If $d^{p}\circ d^{p-1}=0$ \ then  $\ \delta ^{p}\circ
\delta ^{p-1}=0$.
\end{lemma}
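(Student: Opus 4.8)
The plan is to use the single structural feature that drives the whole construction: by the very definition of the maps $\delta^{p}$, the family $\Delta=\{\Delta_{p}\}$ intertwines the two codifferentials, and each $\Delta_{p}$ is injective on cochains --- the latter being precisely what makes the equation $\Delta_{p+1}\circ\delta^{p}=d^{p}\circ\Delta_{p}$ determine $\delta^{p}$ unambiguously in the first place.

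First I would write out the two instances of the intertwining relation that are available here. Since $p>1$, both indices $p$ and $p-1$ are positive, so the construction provides
\begin{equation*}
\Delta_{p+1}\circ\delta^{p}=d^{p}\circ\Delta_{p},
\qquad
\Delta_{p}\circ\delta^{p-1}=d^{p-1}\circ\Delta_{p-1}.
\end{equation*}
Composing these, and then invoking the hypothesis $d^{p}\circ d^{p-1}=0$,
\begin{equation*}
\Delta_{p+1}\circ\bigl(\delta^{p}\circ\delta^{p-1}\bigr)
=d^{p}\circ\bigl(\Delta_{p}\circ\delta^{p-1}\bigr)
=d^{p}\circ d^{p-1}\circ\Delta_{p-1}
=0 ,
\end{equation*}
so that the map $\delta^{p}\circ\delta^{p-1}:\mathcal{C}^{p-1}\to\mathcal{C}^{p+1}$ takes its values in $\ker\Delta_{p+1}$.

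It then remains to upgrade this to $\delta^{p}\circ\delta^{p-1}=0$, which is exactly where the injectivity of $\Delta_{p+1}$ is used. This can be read off the explicit shape of the map: if
\begin{equation*}
\Delta_{p+1}\psi(y_{1}\otimes\cdots\otimes y_{2p+4})
=\psi(y_{1}\otimes\cdots\otimes y_{2p+3})\otimes y_{2p+4}
+\alpha\, y_{1}\otimes\psi(y_{2}\otimes\cdots\otimes y_{2p+4})
\end{equation*}
vanishes identically, then, fixing a basis of $V$ and separating the two rank-one summands according to their last, respectively first, tensor leg, one forces all structure constants of $\psi$ to vanish, i.e. $\psi=0$. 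Applying this with $\psi=\bigl(\delta^{p}\circ\delta^{p-1}\bigr)(\xi)$ for an arbitrary $\xi\in\mathcal{C}^{p-1}$ yields $\delta^{p}\circ\delta^{p-1}=0$.

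The homological bookkeeping is therefore immediate; the only genuine point --- and the place to be careful --- is the injectivity of $\Delta_{p+1}$ on $\mathcal{C}^{p+1}$. One must check that the particular choice of $\Delta$ entering the construction really is injective (this is where the assumptions on the parameter $\alpha$ and, mildly, on $\dim V$ are used), since without it the argument above would only confine $\delta^{p}\circ\delta^{p-1}$ to $\ker\Delta_{p+1}$ rather than collapse it to zero. This is also the natural hinge on which the negative results of the next sections turn: when no such injective, $d$-compatible $\Delta$ exists, the conclusion of the lemma cannot be used to produce a complex on the ternary side.
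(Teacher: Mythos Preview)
Your argument is exactly the paper's: compose the two intertwining identities to obtain $\Delta_{p+1}\circ(\delta^{p}\circ\delta^{p-1})=d^{p}\circ d^{p-1}\circ\Delta_{p-1}=0$, and conclude. The paper's proof stops at this line and does not spell out the passage from $\Delta_{p+1}\circ(\delta^{p}\circ\delta^{p-1})=0$ to $\delta^{p}\circ\delta^{p-1}=0$; you make that step explicit by invoking injectivity of $\Delta_{p+1}$, which is indeed the tacit assumption behind the whole construction (and behind the uniqueness of $\delta^{p}$ in the defining equation). So your proof is the same route, only more carefully stated.
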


\begin{proof}
One has $\Delta_{p+1} \circ \delta ^{p}=d^{p}\circ \Delta_{p}$, then
$\Delta_{p+1} \circ \delta ^{p}\circ \delta ^{p-1}=d^{p}\circ
\Delta_{p} \circ \delta ^{p-1}=d^{p}\circ d^{p-1}\circ \Delta_{p-1} =0$, because $%
d^{p}\circ d^{p-1}=0.$
\end{proof}

As a consequence of the previous lemma, one may obtain a complex of
ternary algebras starting from a complex of binary algebras and a
map $\Delta$. This process was used by Takhtajan to construct a
cohomology of ternary Nambu algebras using the
Chevalley-Eilenberg cohomology of Lie algebras. The binary multiplication  used to that end is
defined as follows :

Let $(V,[~,~,~])$ be a ternary Nambu algebra.  Set $W=V \otimes V$.
The multiplication on $W$ is defined for
$x_1 \otimes x_2,y_1 \otimes y_2\in W$ by
\begin{equation*}
[x_1 \otimes x_2,y_1 \otimes y_2]_W=[x_1 , x_2,y_1] \otimes y_2+y_1 \otimes[x_1 , x_2, y_2]
\end{equation*}

\subsection{Takhtajan's Construction and Ternary Algebras of Associative Type}
In the  sequel we show that we cannot derive  a cohomology of  a partially associative
ternary algebra  from a cohomology of
binary algebras of associative type. A construction is possible in the case of totally associative ternary algebras
but the cohomology obtained is the cohomology of weak totally associative ternary algebras described above.

 A binary algebra is called of associative type if it is given by a vector
 space
$V$ and a multiplication $\mu$ satisfying an identity of the form
\begin{equation*}\mu(\mu(u\otimes v)\otimes w)+ \lambda\  \mu(u\otimes \mu(v\otimes w))=0
\end{equation*}
where $\lambda$ is a scalar element different from zero. In
particular, we have associative algebras for $\lambda=-1$ and
skew-associative algebras for $\lambda=1$. In the last section, we show that the skew-associative algebras cannot carry a cohomology adapted to deformation theory.

In the following, we try to adapt the Takhatjan's procedure to ternary algebras of associative type.
We set
\begin{equation*}
\begin{array}{c}
\mu ((x_{1}\otimes x_{2})\otimes (y_{1}\otimes y_{2})) =
m(x_{1}\otimes x_{2}\otimes y_{1})\otimes y_{2}+\alpha\
x_{1}\otimes m(x_{2}\otimes y_{1}\otimes y_{2})
\end{array}%
\end{equation*}
where $\alpha\in \mathbb{K}$.

In order to check whether $\mu$ is a binary
operation of associative type, we compute :
\begin{eqnarray*}
A_{1}&=&\mu (\mu ((x_{1}\otimes x_{2})\otimes (y_{1}\otimes
y_{2}))\otimes
(z_{1}\otimes z_{2}))\\
\ &=&\mu ((m(x_{1}\otimes x_{2}\otimes y_{1})\otimes y_{2})\otimes
(z_{1}\otimes z_{2})) +\alpha\ \mu ((x_{1}\otimes m(x_{2}\otimes
y_{1}\otimes y_{2}))\otimes (z_{1}\otimes z_{2}))
\\\ & =& m(m(x_{1}\otimes x_{2}\otimes y_{1})\otimes y_{2}\otimes
z_{1})\otimes z_{2} +\alpha \ m(x_{1}\otimes x_{2}\otimes
y_{1})\otimes m(y_{2}\otimes z_{1}\otimes z_{2})
\\\  & &+\alpha \ m(x_{1}\otimes m(x_{2}\otimes y_{1}\otimes y_{2})\otimes z_{1})\otimes z_{2}
+\alpha ^{2}\ x_{1}\otimes m(m(x_{2}\otimes y_{1}\otimes
y_{2})\otimes z_{1}\otimes z_{2})
\end{eqnarray*}
and
\begin{eqnarray*}
A_{2}&=& \mu ((x_{1}\otimes x_{2})\otimes \mu ((y_{1}\otimes
y_{2})\otimes (z_{1}\otimes z_{2})))\\\ &=&\mu ((x_{1}\otimes
x_{2})\otimes (m( y_{1}\otimes y_{2}\otimes z_{1})\otimes
z_{2}))+\alpha\ \mu ((x_{1}\otimes x_{2})\otimes (y_{1}\otimes m
(y_{2}\otimes z_{1}\otimes z_{2}))\\
\ &=&m(x_{1}\otimes x_{2}\otimes
m(y_{1}\otimes y_{2}\otimes z_{1}))\otimes z_{2} +\alpha\
x_{1}\otimes m( x_{2}\otimes m(y_{1}\otimes y_{2}\otimes
z_{1})\otimes z_{2}) \\
\  & &+\alpha\  m(x_{1}\otimes x_{2}\otimes
y_{1})\otimes m( y_{2}\otimes z_{1}\otimes z_{2}) +\alpha ^{2}\
x_{1}\otimes m(x_{2}\otimes y_{1}\otimes  m(y_{2}\otimes
z_{1}\otimes z_{2}))
\end{eqnarray*}

The multiplication $\mu$ is of associative type if $A_{1}+\lambda\ A_{2}=0$, that is
\begin{eqnarray*}
A_{1}+\lambda A_{2}&=&[m(m(x_{1}\otimes x_{2}\otimes y_{1})\otimes
y_{2}\otimes z_{1})+\alpha\ m(x_{1}\otimes m(x_{2}\otimes
y_{1}\otimes y_{2})\otimes z_{1}) \\ \ && +\lambda\ m(x_{1}\otimes x_{2}\otimes m(
y_{1}\otimes y_{2}\otimes z_{1}))]\otimes z_{2} +\alpha\
x_{1}\otimes [\alpha\ m(m(x_{2}\otimes y_{1}\otimes  y_{2})\otimes
z_{1}\otimes z_{2})\\\ && +\lambda\ m(x_{2}\otimes m(y_{1}\otimes  y_{2}\otimes
z_{1})\otimes z_{2})+\alpha\lambda \ m(x_{2}\otimes y_{1}\otimes
m(y_{2}\otimes z_{1}\otimes z_{2}))]\\\ &&+ \alpha(1+\lambda)\  m(x_{1}\otimes
x_{2}\otimes y_{1})\otimes m(y_{2}\otimes z_{1}\otimes z_{2})
\end{eqnarray*}

 If $m$ is a ternary operation which defines a partially associative ternary
 algebra of a given type, then $A_{1}+\lambda A_{2}=0$ if $\alpha(1+\lambda)=0$ and the coefficients
 $(1,\alpha,\lambda)$,
$(\alpha^2,\lambda\alpha,\lambda\alpha^2)$ are proportional.

The first condition is satisfied when $\alpha=0$ or $\lambda =-1$. The case $\alpha=0$ is impossible.

If $\lambda =-1$, the coefficients
 $(1,\alpha,-1)$ and
$(\alpha^2,-\alpha,-\alpha^2)$ should be proportional. This is possible only
over  $\mathbb{C}$ with $\alpha=\pm i$. The associativity condition
needed must be of one of the following  forms
\begin{equation*}m(m(x_{1}\otimes x_{2}\otimes x_{3})\otimes x_{4}\otimes x_{5})+
i\ m(x_{1}\otimes m(x_{2}\otimes x_{3}\otimes x_{4})\otimes x_{5})-
m(x_{1}\otimes x_{2}\otimes m(x_{3}\otimes x_{4}\otimes x_{5}))=0
\end{equation*}
\begin{equation*}m(m(x_{1}\otimes x_{2}\otimes x_{3})\otimes x_{4}\otimes x_{5})-
i\ m(x_{1}\otimes m(x_{2}\otimes x_{3}\otimes x_{4})\otimes x_{5})-
 m(x_{1}\otimes x_{2}\otimes m( x_{3}\otimes x_{4}\otimes x_{5}))=0
\end{equation*}
In the both cases one may construct a cohomology of ternary algebras
according to Takhtajan's construction and  using the Hochschild complex of associative binary multiplication.

Therefore, we have the following proposition :

\begin{proposition}
It is impossible to construct, using Takhtajan's construction, a
 cohomology of ternary algebras  $(V,m)$ which are
partially  associative (resp. alternate partially associative) starting from a complex of binary algebra of
associative type.

\end{proposition}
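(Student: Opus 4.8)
The plan is to follow the Takhtajan-style reduction already set up in the section and show that the algebraic constraints forcing $\mu$ to be of associative type are incompatible with $m$ being partially associative (or alternate partially associative). First I would recall the computation of $A_1 + \lambda A_2$ carried out above: for the induced binary multiplication $\mu$ on $W = V \otimes V$ to satisfy $A_1 + \lambda A_2 = 0$ for \emph{all} arguments, one needs, separately, the vanishing of the three groups of terms living in $V \otimes (V \otimes V)$-, $(V\otimes V)\otimes V$-, and ``mixed''-type tensor slots. The mixed term $\alpha(1+\lambda)\, m(x_1\otimes x_2\otimes y_1)\otimes m(y_2\otimes z_1\otimes z_2)$ cannot be cancelled against anything else, so it forces $\alpha(1+\lambda)=0$; since $\alpha=0$ kills the construction entirely, we must take $\lambda = -1$.

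Next I would analyze the remaining two bracketed expressions with $\lambda=-1$. The term $z_2$-slot expression reads $m(m(x_1\otimes x_2\otimes y_1)\otimes y_2\otimes z_1)+\alpha\, m(x_1\otimes m(x_2\otimes y_1\otimes y_2)\otimes z_1)- m(x_1\otimes x_2\otimes m(y_1\otimes y_2\otimes z_1))$, which must vanish identically, i.e.\ $m$ must satisfy an associativity relation with coefficient vector $(1,\alpha,-1)$; the $x_1$-slot expression forces $m$ to satisfy a relation with coefficient vector $(\alpha^2,-\alpha,-\alpha^2)$, i.e.\ $(\alpha,-1,-\alpha)$ after dividing by $\alpha$. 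For a single ternary operation $m$ these two identities coincide only if the coefficient triples $(1,\alpha,-1)$ and $(\alpha,-1,-\alpha)$ are proportional, which (as noted in the text) happens only for $\alpha = \pm i$ over $\mathbb{C}$. The key point is then that these admissible associativity relations — $m(m(x_1,x_2,x_3),x_4,x_5) \pm i\, m(x_1,m(x_2,x_3,x_4),x_5) - m(x_1,x_2,m(x_3,x_4,x_5)) = 0$ — are \emph{not} the partial-associativity relation $\sum = 0$, nor either of the two alternate partial-associativity relations (coefficient triples $(1,-1,1)$ and $(1,-1,-1)$): none of $(1,1,1)$, $(1,-1,1)$, $(1,-1,-1)$ is proportional to $(1,\pm i,-1)$, since a real triple proportional to $(1,\pm i,-1)$ would need a non-real scaling factor, and even allowing complex scaling the ratio of first to third entry is $+1$ for the Takhtajan-admissible case but $-1$ for alternate-second-kind and $+1$ for the other two — while the middle-to-first ratios $\pm i$ are never real. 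Hence there is no way to choose $\alpha$ so that the binary algebra $(W,\mu)$ is of associative type \emph{and} $(V,m)$ is partially or alternate-partially associative.

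Finally I would assemble these observations into the statement: if such a Takhtajan construction existed, it would in particular require $\mu$ to be a binary product of associative type built from $m$ via the given formula, hence $\alpha(1+\lambda)=0$ and $\lambda=-1$, hence $\alpha=\pm i$ and $m$ forced to satisfy one of the two displayed ``$\pm i$'' identities, contradicting the hypothesis that $m$ is partially associative (resp.\ alternate partially associative of first or second kind). Therefore no such construction is possible. I expect the only mildly delicate step to be the bookkeeping that separates $A_1+\lambda A_2$ into its independent tensor-slot components and the verification that the mixed term is genuinely unmatched — the rest is the elementary linear-algebra observation about non-proportionality of the relevant coefficient triples, which is essentially already recorded in the discussion preceding the proposition.
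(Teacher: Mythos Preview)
Your proposal is correct and follows essentially the same route as the paper: the proposition is stated as an immediate consequence of the preceding computation of $A_1+\lambda A_2$, and you have reproduced exactly that analysis---the mixed term forces $\alpha(1+\lambda)=0$, ruling out $\alpha=0$ leaves $\lambda=-1$, proportionality of the two coefficient triples then forces $\alpha=\pm i$, and the resulting identities are visibly distinct from the (alternate) partial-associativity relations. One small slip: in your ratio bookkeeping you claim the first-to-third ratio for the admissible triple $(1,\pm i,-1)$ is $+1$, whereas it is $-1$; this does not affect the argument, since your decisive observation that the middle-to-first ratio $\pm i$ is non-real already suffices to exclude all three real candidate triples.
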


\begin{remark}
If the ternary algebra $m$ is totally associative, then the
corresponding binary algebra is of associative type if
\begin{eqnarray*}
  1+\alpha +\lambda =0 \\
  \alpha ^2 +\lambda \alpha+\lambda \alpha^2 =0 \\
  \alpha (1+\lambda )=0
\end{eqnarray*}
Which implies that $\alpha =0$ and $\lambda =-1$.

Therefore, using Takhtajan procedure, we can construct a cohomology of totally associative
ternary algebras $(V,m)$ with a binary multiplication $\mu$ defined on $W=V \otimes V$
  by
\begin{equation*}
\mu ((x_1 \otimes x_2)\otimes (y_1 \otimes y_2))=m(x_1 \otimes x_2\otimes y_1)
 \otimes y_2 \quad
\forall x_1 \otimes x_2,y_1 \otimes y_2\in W
\end{equation*}

Let $\varphi : V^{\otimes 2p-1}\rightarrow V$ be a $p$-cochain of the totally
associative ternary algebra $(V,m)$. We set
$\Delta \varphi (x_1 \otimes \cdots \otimes x_{2p})=\varphi (x_1 \otimes \cdots \otimes x_{2p-1})\otimes  x_{2p}$.
 It turns out that in this case, we recover the coboundary map of the weak totally associative ternary
 algebras discussed in Section 3.

 Indeed, using the Hochschild coboundary of the binary associative algebra $(W,\mu)$, we have
 \begin{eqnarray*}
 d^p \Delta \varphi (x_1 \otimes \cdots \otimes x_{2p+2})&=&
 \mu (x_1 \otimes x_2 \otimes \Delta \varphi (x_3 \otimes \cdots \otimes x_{2p+2}))\\& &
 +\sum_{i=1}^{p}{\Delta \varphi (x_1 \otimes \cdots \otimes\mu (x_{2i-1}\otimes x_{2i}
 \otimes x_{2i+1}\otimes x_{2i+2})\otimes\cdots \otimes x_{2p+2})}\\
 && + (-1)^p \mu (\Delta \varphi (x_1 \otimes \cdots \otimes x_{2p})\otimes
x_{2p+1}\otimes x_{2p+2})\\
 \ &=&
 m (x_1 \otimes x_2 \otimes  \varphi (x_3 \otimes \cdots \otimes x_{2p+1}))\otimes x_{2p+2}\\& &
 +\sum_{i=1}^{p}{\varphi (x_1 \otimes \cdots \otimes m (x_{2i-1}\otimes x_{2i}
 \otimes x_{2i+1})\otimes\cdots \otimes x_{2p+1})}\otimes x_{2p+2}\\
 && + (-1)^p m (\varphi (x_1 \otimes \cdots \otimes x_{2p-1})\otimes
 x_{2p}\otimes x_{2p+1})\otimes x_{2p+2}
 \end{eqnarray*}

 Then, we set
 \begin{eqnarray*}
 \delta^p  \varphi (x_1 \otimes \cdots \otimes x_{2p+1})&=&
 m (x_1 \otimes x_2 \otimes  \varphi (x_3 \otimes \cdots \otimes x_{2p+1}))
\\& &
 +\sum_{i=1}^{p}{\varphi (x_1 \otimes \cdots \otimes m (x_{2i-1}\otimes x_{2i}
 \otimes x_{2i+1})\otimes\cdots \otimes x_{2p+1})}\\
 && + (-1)^p m (\varphi (x_1 \otimes \cdots \otimes x_{2p-1})\otimes
 x_{2p}\otimes x_{2p+1})
 \end{eqnarray*}
 and recover the cohomology of weak totally associative ternary algebras defined above.
\end{remark}

\section{On Deformation Cohomology of Skew-Associative Algebras}
In this section, we show that the 1-cohomology and 2-cohomology
guided by 1-parameter formal deformations cannot be extended to a
3-cohomology. Therefore the operad of skew-associative binary
algebras is not Koszul.
\begin{definition}
A skew-associative binary algebra is given by a $\mathbb{K}$-vector
space $V$\ and a bilinear multiplication $\mu$ satisfying, for every $x_{1},x_{2},x_{3}\in V$%
,
\begin{equation}
\mu(\mu(x_{1}\otimes x_{2})\otimes x_{3})=-\mu(x_{1}\otimes \mu(x_{2}\otimes x_{3}))
\end{equation}

\end{definition}

The formal deformation theory leads to the following $1$-coboundary
and $2$-coboundary operators for a  cohomology of skew-associative
binary algebra $\mathcal{A}=(V,\mu)$ adapted to formal deformation theory.
The  $1$-coboundary operator of $\mathcal{A}$ is  the map
\begin{equation*}
\delta ^{1}: \mathcal{C} ^{0}({\mathcal{A} ,\mathcal{A} }) \longrightarrow \mathcal{C}
^{1}({\mathcal{A} ,\mathcal{A} }), \quad f \longmapsto \delta ^{1}f
\end{equation*}
defined by
\begin{equation*}
\delta ^{1}f(x_{1}\otimes x_{2}) =f(\mu(x_{1}\otimes x_{2}))-\mu(f(x_{1})\otimes x_{2})-
\mu(x_{1}\otimes f(x_{2}))
\end{equation*}

The  $2$-coboundary operator of ${\mathcal{A}}$ the map
\begin{equation*}
\delta^{2}: \mathcal{C}^{1}({\mathcal{A} ,\mathcal{A} }) \longrightarrow \mathcal{C}^{2}({\mathcal{A} ,\mathcal{A}}), \quad \varphi \longmapsto \delta ^{2}\varphi
\end{equation*}
defined by
\begin{align*}
\delta^{2}\varphi (x_{1}\otimes x_{2}\otimes x_{3}) &= \mu(\varphi (x_{1}\otimes
x_{2})\otimes x_{3}))+\mu(x_{1}\otimes \varphi (x_{2}\otimes x_{3})) \\
&+\varphi (\mu(x_{1}\otimes x_{2})\otimes x_{3}) +\varphi (x_{1}\otimes \mu(x_{2}\otimes x_{3}))
\end{align*}

One may characterize the operator $\delta^{2}$ using the following skew-associator map
\begin{equation*}
\circ : \mathcal{C}^{r}({\mathcal{A} ,\mathcal{A} }) \times \mathcal{C}^{s}({\mathcal{A} ,\mathcal{A} }) \longrightarrow \mathcal{C}^{r+s}({%
\mathcal{A} ,\mathcal{A} }), \quad (f,g) \longmapsto f\circ g
\end{equation*}
defined by
\begin{equation*}
f\circ g (x_1\otimes\cdots\otimes x_{r+s})=\sum_{i=0}^{r-1}{f (
x_1\otimes\cdots\otimes g (x_{i+1}\otimes\cdots\otimes x_{i+s}),
\cdots\otimes x_{r+s})}
\end{equation*}
We have
\begin{equation*}\delta^{2}\varphi=\mu \circ \varphi +\varphi \circ \mu
\end{equation*}
 Note also that
$\delta ^{2}\circ \delta ^{1}=0.$

\begin{proposition}
A  $3$-coboundary operator   extending the maps
$\delta^{1}$ and $\delta^{2}$ to a complex of for  skew-associative
binary algebras doesn't exist.
\end{proposition}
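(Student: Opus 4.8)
The plan is to mimic the argument of Proposition~\ref{prop}: postulate the most general linear map $\delta^3:\mathcal{C}^2(\mathcal{A},\mathcal{A})\to\mathcal{C}^3(\mathcal{A},\mathcal{A})$ built from $\mu$ and a $2$-cochain, impose $\delta^3\circ\delta^2=0$ on a generic skew-associative algebra, and show that the only solution is the zero map. Concretely, I would write
\begin{equation*}
\delta^3 f(x_1\otimes x_2\otimes x_3\otimes x_4)=\sum_{j} a_j\,(\text{a composition of }\mu,f\text{ applied to }x_1,\dots,x_4),
\end{equation*}
where the sum runs over all ways of inserting $f$ (a map $V^{\otimes 3}\to V$) and one application of $\mu$ so that the result is a multilinear map $V^{\otimes 4}\to V$: the terms of the form $f(\mu(\cdot\otimes\cdot)\otimes\cdot\otimes\cdot)$ and its three positional variants, together with $\mu(f(\cdot\otimes\cdot\otimes\cdot)\otimes\cdot)$ and $\mu(\cdot\otimes f(\cdot\otimes\cdot\otimes\cdot))$ — six coefficients $a_1,\dots,a_6\in\mathbb{K}$ in all (the natural analogue of the Hochschild-type coboundary).

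Next I would compute $\delta^3(\delta^2 g)(x_1\otimes\cdots\otimes x_6)$ for an arbitrary $1$-cochain $g$. Each term produced is a word in $\mu$ and $g$ applied to $x_1,\dots,x_6$; as in the partially associative case, I would use the defining skew-associativity relation $\mu(\mu(u\otimes v)\otimes w)=-\mu(u\otimes\mu(v\otimes w))$ repeatedly to rewrite every purely-$\mu$ factor in a fixed normal form (say, all products left-normed), so that the resulting expression is a linear combination of a list of manifestly independent tree-monomials in $g$ and $\mu$. Collecting coefficients, $\delta^3\circ\delta^2=0$ on every skew-associative algebra forces each of these coefficients — each an integer-linear combination of $a_1,\dots,a_6$ such as $a_i\pm a_j$, $a_i+a_j$ — to vanish simultaneously. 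The claim is that this homogeneous linear system over $\mathbb{K}$ has only the trivial solution $a_1=\dots=a_6=0$; one then concludes, exactly as in the Corollary following Proposition~\ref{prop}, that no such $\delta^3$ exists, hence the deformation complex cannot be continued and the operad of skew-associative algebras is not Koszul.

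The main obstacle is bookkeeping rather than conceptual: one must be sure that the tree-monomials retained after normalization are genuinely linearly independent as natural operations (so that a free/generic skew-associative algebra realizes them independently), and that no further relation among them is hidden — for instance, one should check there is no subtle cancellation of the kind $\mu(g(u\otimes v\otimes w)\otimes z)$ versus $-\mu(u\otimes\cdots)$ that would allow a nonzero $(a_1,\dots,a_6)$. It is prudent to verify the conclusion on a concrete low-dimensional example (a one- or two-dimensional skew-associative algebra, analogous to Example with $m(e_1\otimes e_1\otimes e_1)=e_2$) to confirm that the system indeed collapses. Since the sign pattern $+1$ in $\mu(\mu\otimes\cdot)=-\mu(\cdot\otimes\mu)$ is the ``wrong'' sign for the Hochschild differential to square to zero, I expect the computation to reproduce the same obstruction phenomenon already seen for partially associative ternary algebras, with all six coefficients forced to be $0$.
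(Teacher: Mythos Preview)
Your approach is exactly the paper's: write the most general Hochschild-type $\delta^3$ with undetermined coefficients, expand $\delta^3\circ\delta^2$ on a generic cochain, reduce modulo the skew-associativity relation, and observe that the resulting linear system in the $a_i$ has only the trivial solution. Two bookkeeping slips to correct before you carry it out: there are only \emph{five} natural terms in $\delta^3 f$ (three ways to insert $\mu$ into the ternary $f$, and two ways to insert $f$ into the binary $\mu$), not six; and since $\delta^3(\delta^2 g)\in\mathcal{C}^3=\mathrm{Hom}(V^{\otimes 4},V)$, it is evaluated on $x_1\otimes x_2\otimes x_3\otimes x_4$, not on six variables.
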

\begin{proof}
We set the following general form of $3$-coboundary operator
\begin{eqnarray*} \delta ^3f (x_1\otimes x_2\otimes x_3\otimes x_4)=a_1 \mu
(x_1\otimes f(x_2\otimes  x_3\otimes x_4))+ a_2 f(\mu (x_1\otimes x_2)\otimes x_3\otimes x_4)+\\ a_3 f(x_1\otimes
\mu (x_2\otimes x_3)\otimes x_4 )+ a_4 f(x_1\otimes x_2\otimes
\mu(x_3\otimes x_4 ))+a_5 \mu(f((x_1\otimes x_2\otimes x_3)\otimes
x_4)
\end{eqnarray*}

We consider a 3-cochain $f$, that is a map $f:V^{\otimes 3}\rightarrow V$, and a $2$-cochain
 $g$, that is a map $f:V^{\otimes 2}\rightarrow V$. We compute $\delta^3 ( \delta^2 g)
(x_1\otimes x_2\otimes x_3\otimes x_4)$ and substitute $\mu
(y_1\otimes \mu(y_2\otimes y_3))$ by $-\mu(\mu(y_1\otimes
y_2)\otimes y_3)$. Then, we obtain
\begin{eqnarray*}
(a_3-a_4) f(x_1\otimes \mu (\mu (x_2\otimes x_3)\otimes x_4 )+
(a_2+a_4) f(\mu (x_1\otimes x_2)\otimes \mu (x_3\otimes x_4 ))+\\
(a_2 -a_3)f(\mu (\mu (x_1\otimes x_2)\otimes x_3)\otimes x_4 ) +
(a_1 +a_4) \mu (x_1\otimes f(x_2\otimes \mu (x_3\otimes x_4 )))+\\
(a_1+a_3 )\mu (x_1\otimes f(\mu (x_2\otimes x_3)\otimes x_4 ))+
(a_3 +a_5) \mu (f(x_1\otimes \mu (x_2\otimes x_3))\otimes x_4 )+\\
(a_2 +a_5) \mu (f(\mu (x_1\otimes x_2)\otimes x_3)\otimes x_4 )+
(a_2 -a_1) \mu (\mu (x_1\otimes x_2]\otimes f(x_3\otimes x_4 ))+\\
(a_5-a_1) \mu (\mu (x_1\otimes f(x_2\otimes x_3))\otimes x_4 )+ (a_5
-a_4) \mu (\mu (f(x_1\otimes x_2)\otimes x_3)\otimes x_4 )=0
\end{eqnarray*}
The equation is satisfied for all $x_1,x_2,x_3,x_4\in
V$ if and only if  $a_{i_{\{i=1,...,5\}}}$ are all equal to 0.
\end{proof}
\begin{corollary}
A deformation cohomology for skew-associative
binary algebras doesn't exist. Then the operad  of skew-associative binary
algebras is not Koszul.
\end{corollary}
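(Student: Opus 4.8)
The first assertion is immediate from the preceding Proposition. The one-parameter formal deformation theory of a skew-associative algebra $\mathcal{A}=(V,\mu)$ fixes the two lowest coboundary operators: expanding the skew-associativity condition for the deformed product and collecting the linear and the quadratic terms in the deformation parameter produces exactly the $\delta^{1}$ and $\delta^{2}$ displayed above. A deformation cohomology would require these to be the initial segment of a cochain complex $(\mathcal{C}^{\bullet}(\mathcal{A},\mathcal{A}),\delta^{\bullet})$ with $\delta^{p+1}\circ\delta^{p}=0$ for all $p$. The Proposition shows that, for the general candidate $\delta^{3}$, the requirement $\delta^{3}\circ\delta^{2}=0$ forces all coefficients $a_{1},\dots,a_{5}$ to vanish, so the only admissible $\delta^{3}$ is the zero map; this cannot serve as the degree-three differential of a deformation complex, and hence no deformation cohomology exists.

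For the second assertion the plan is to argue by contraposition, using the operadic deformation theory of Ginzburg--Kapranov and Markl (see \cite{Ginzburg,markl}). The operad $sAss$ of skew-associative binary algebras is \emph{quadratic}: it is the free operad on a single binary generator $\mu$ modulo the one quadratic relation $\mu\circ(\mu\otimes\mathrm{id})+\mu\circ(\mathrm{id}\otimes\mu)=0$. Quadratic operads admit a Koszul dual cooperad, and the fundamental theorem of Koszul duality asserts that when such an operad is Koszul its algebras carry a canonical deformation cohomology, computed from the Koszul dual, whose coboundary operators square to zero in \emph{all} degrees and whose lowest terms coincide with the deformation-theoretic $\delta^{1}$ and $\delta^{2}$. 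In particular, Koszulity would produce a genuine $\delta^{3}$ extending $\delta^{1},\delta^{2}$ and satisfying $\delta^{3}\circ\delta^{2}=0$.

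This is exactly what the preceding Proposition forbids, so I would conclude that $sAss$ cannot be Koszul. The step I expect to be most delicate is the identification, within the operadic framework, of the low-degree differentials furnished by Koszulity with the deformation-theoretic $\delta^{1},\delta^{2}$ fixed above; this matching is what makes the nonexistence result obstruct Koszulity itself rather than merely one chosen cochain model. It follows from the fact that both differentials are governed by the same quadratic relation through the convolution pre-Lie structure on the space of maps from the Koszul dual cooperad to the endomorphism operad $\mathrm{End}_{V}$, a standard feature of the theory that I would invoke rather than recompute.
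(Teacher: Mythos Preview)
Your argument is correct and follows the same logical line as the paper, which in fact gives no explicit proof of this corollary at all: it is stated as an immediate consequence of the preceding Proposition, exactly as in the analogous Corollary for partially associative ternary algebras earlier in the paper. Your write-up is considerably more detailed than what the paper provides; in particular, you make explicit the Koszul-duality step (that Koszulity of the quadratic operad would furnish a canonical deformation complex whose low-degree differentials agree with the $\delta^{1},\delta^{2}$ dictated by deformation theory), which the paper leaves entirely implicit. Your caveat about identifying the operadic differentials with the deformation-theoretic ones is well placed and is precisely the point that justifies restricting the search for $\delta^{3}$ to the five-parameter ansatz used in the Proposition.
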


\end{document}